\documentclass{article}

\usepackage{amsmath, amsthm, amssymb, mathrsfs, algorithm}
\usepackage{graphicx, subfigure, tikz}
\usepackage{hyperref} 

\graphicspath{{Figure/}}

\newtheorem{example}{\noindent Example}
\newtheorem{theorem}{Theorem}[section]

\newtheorem{lemma}{Lemma}[section]

\renewcommand{\vec}[1]{\boldsymbol{#1}}


\begin{document}

\title{Computation of transmission eigenvalues by the regularized Schur complement for the boundary integral operators }
\date{}

\author{
Yunyun Ma\thanks{
School of Computer Science, Dongguan University of Technology, Dongguan, P. R. China.
{\it mayy007@foxmail.com}}, \  
Fuming Ma\thanks{Institute of Mathematics, Jilin University, Changchun, P. R. China.
{\it mafm@jlu.edu.cn}}, \ 
Yukun Guo\thanks{School of Mathematics, Harbin Institute of Technology, Harbin, P. R. China. {\it ykguo@hit.edu.cn} (Corresponding author)}\ \ and
Jingzhi Li\thanks{Department of Mathematics, Southern University of Science and Technology, Shenzhen, P. R. China. {\it li.jz@sustech.edu.cn}}
}

\maketitle

\begin{abstract}
This paper is devoted to the computation of transmission eigenvalues in the inverse acoustic scattering theory. This problem is first reformulated as a two by two boundary system of boundary integral equations. Next, utilizing the Schur complement technique, we develop a Schur complement operator with regularization to obtain a reduced system of boundary integral equations. The  Nystr\"{o}m discretization is then used to obtain an eigenvalue problem for a matrix. We employ the recursive integral method for the numerical computation of the matrix eigenvalue. Numerical results show that the proposed method is efficient and reduces computational costs. 
\\

\noindent{\it Keywords}: transmission eigenvalues, inverse scattering, boundary integral equations, Nystr\"{o}m method, Schur complement, spectral projection
\end{abstract}


\section{Introduction}

We consider in this paper the calculation of the transmission eigenvalue problems, which play an important role in scattering theory for inhomogeneous media. Transmission eigenvalues are not only related to the validity of the linear sampling method \cite{Colton1996}, but also give information on the material  properties of the scattering object \cite{Cakoni2016, Colton2019}.  The transmission eigenvalue problem is a boundary value problem for a coupled pair of partial differential  equations in a  bounded domain. But that problem is not covered by the standard theory  of elliptic partial differential equations since it is neither elliptic nor self-adjoint. Hence its study is widely perceived as a challenging issue. Calculating the transmission eigenvalues requires special effort.

Research associated with transmission eigenvalues have focused on two main themes. 
The first concerns are the general theory of  these problems such as the solvability, discreteness and existence, and the spectral properties of the transmission eigenvalue problems \cite{Cakoni2016, Colton2019, Liu2020}.
The mathematical methods for studying these problems include the variational methods and boundary integral equation methods \cite{Cosson2013}. Efficient numerical methods to determine transmission eigenvalues is the second topic. The basis for the numerical techniques for solving the transmission eigenvalues are the finite element \cite{Ji2012, Sun2016},  boundary element methods \cite{Cakoni2017, Cosson2013, Kleefeld2013, Kress2019} and radial basis functions \cite{Kleefeld2018}. Note that the transmission eigenvalue problem is non-linear  and non-selfadjoint. The numerical discretization usually leads to a non-Hermitian and nonlinear matrix eigenvalue problem. That is very challenging in numerical linear algebra. The integral based methods \cite{Beyn2012, Huang2016} were developed to compute the corresponding matrix eigenvalues. An approximation to the eigenvalue  in a given simple closed curve in the complex plane is found by spectral projection using counter integral of the resolvent \cite{Huang2016, Fang2016}.

In this paper, we develop a new integral equation formulation in terms of the Schur complement to a two by two system of boundary integral equations, which is  used to formulate the transmission eigenvalue problem.  
If one of those boundary integral operators is  not invertible, we employ the regularization strategy for modifying the Schur complement. The Nystr\"{o}m  method based on trigonometric interpolation is used to  the discretization of the integral equations for the domains with smooth boundary. For domains with corners, we replace the uniform mesh to sigmoidal-graded meshes. The nonlinear matrix eigenvalue problem is computed by the recursive integral method.  This new method proposed  reduces the computational costs and can be used to study the transmission eigenvalues for a more general refractive index and domain.

We organize this paper in five sections. The boundary integral equation formulations for the transmission eigenvalue problems are developed in Section \ref{sec:preliminary}. We propose a Schur complement with regularization for the  two by two boundary integral equations in Section \ref{sec:schur_complement} and introduce the recursive integral method to find the eigenvalues for a bounded operator. We describe a Nystr\"{o}m discretization for the boundary integral operators and spectral projection  in Section \ref{sec:nystrom}. Finally, numerical results are presented in Section \ref{sec:numerical_example} to confirm the effectiveness of the proposed method. 


\section{Integral equation method}\label{sec:preliminary}

We present in this section the integral equations of the Helmholtz interior transmission problem. The integral equations for that problems are formulated as a $2\times 2$ system of integral equations, which are derived from Green's formulas. 

We begin with a brief introduction to the transmission eigenvalue problem. Let $\Omega\subset\mathbb{R}^2$ be an open bounded Lipschitz domain. Let $\mu$ be the constant refraction index such that $\mu>1$. The transmission eigenvalue problem is to find $\kappa\in\mathbb{C}$ such that there exist non-trivial solutions $w\in L^2(\Omega)$ and $u\in L^2(\Omega)$ with $w-u\in H^2(\Omega)$ satisfying
\begin{align}
\Delta w +\kappa^2 \mu w=0, ~&\text{in}~\Omega,\label{eq:TEP1} \\
\Delta u +\kappa^2 u=0, ~&\text{in}~\Omega, \label{eq:TEP2}\\
w-u=0,~&\text{on}~\Gamma,\label{eq:TEP3}\\
\frac{\partial w}{\partial\vec{\nu}}-\frac{\partial u}{\partial\vec{\nu}}=0,~&\text{on}~\Gamma,\label{eq:TEP4}
\end{align}
where $\Gamma:=\partial\Omega$ and $\vec{\nu}$ is the unit outward normal to $\Gamma$. According to \cite{Colton2019}, any nonzero value $\kappa$ such that there are nontrival solutions $w$ and $u$ of \eqref{eq:TEP1}-\eqref{eq:TEP4} is called a transmission eigenvalue.

We first recall the boundary integral operators. Let $\Phi_{\kappa}$ be the Green's function given by
\begin{equation*}
\Phi_k(x, y):=\dfrac{{\rm i}}{4}H^{(1)}_0(\kappa|x-y|), \quad x, y\in\mathbb{R}^2,
\end{equation*}
where $H_0^{(1)}$ is the Hankel function of the first kind of order zero. The single and double layer potentials are
defined by
\begin{equation*}\label{Sec2:singleLayer_P}
(\mathcal{SL}_{\kappa}[\phi])(x):=\int_{\Gamma}\Phi_k(x,y)\phi(y){\rm d}s(y),\quad x\in\mathbb{R}^2\backslash\Gamma,
\end{equation*}
and 
\begin{equation*}\label{Sec2:doubleLayer_P}
(\mathcal{DL}_{\kappa}[\phi])(x):=\int_{\Gamma}\dfrac{\partial\Phi_k(x, y)}{\partial \vec{\nu}(y)}\phi(y){\rm d}s(y),\quad x\in\mathbb{R}^2\backslash\Gamma,
\end{equation*}
where $\phi$ is an integrable function. The interior Dirichlet traces on $\Gamma$ of the single and double layer potentials are given by
\begin{align}
(\mathcal{SL}_{\kappa}[\phi])^{-} = & \mathcal{S}_{\kappa}[\phi] \label{Sec2:signleLayer_DJump} \\
(\mathcal{DL}_{\kappa}[\phi])^{-} = & \mathcal{D}_{\kappa}[\phi]-\dfrac{\phi}{2}, \label{Sec2:doubleLayer_DJump}
\end{align}
where the single and double layer operators are defined by
\begin{equation*}\label{Sec2:singleLayer_O}
(\mathcal{S}_{\kappa}[\phi])(x):=\int_{\Gamma}\Phi_k(x,y)\phi(y){\rm d}s(y),\quad x\in\Gamma,
\end{equation*}
and 
\begin{equation}\label{Sec2:doubleLayer_O}
(\mathcal{D}_{\kappa}[\phi])(x):=\int_{\Gamma}\dfrac{\partial\Phi_k(x, y)}{\partial \vec{\nu}(y)}\phi(y){\rm d}s(y),\quad x\in\Gamma.
\end{equation}

We now present the integral formulations for the transmission eigenvalue problem. To this end, we denote that $\kappa_1:=\kappa\sqrt{\mu}$ and
\begin{equation*}
\alpha:=\dfrac{\partial w }{\partial\vec\nu},\quad\beta:=w,~\text {on}~\Gamma.
\end{equation*}
According to the boundary conditions \eqref{eq:TEP3}-\eqref{eq:TEP4}, we have that 
\begin{equation*}
\dfrac{\partial u}{\partial\nu}=\alpha~\text{and}~u=\beta,~\text {on}~\Gamma.
\end{equation*}
We then have the following integral representation 
\begin{equation}\label{Sec2:integralRepresentation_wu}
w=\mathcal{SL}_{\kappa_1}[\alpha]-\mathcal{DL}_{\kappa_1}[\beta]~\text{and}~
u=\mathcal{SL}_{\kappa}[\alpha]-\mathcal{DL}_{\kappa}[\beta]~\text{in} ~\Omega.
\end{equation}
This together with \eqref{Sec2:signleLayer_DJump} and \eqref{Sec2:doubleLayer_DJump} yields that
\begin{equation}\label{Sec2:DirichletTraces} 
\beta=\mathcal{S}_{\kappa_1}[\alpha]-\mathcal{D}_{\kappa_1}[\beta]+\dfrac{\beta}{2}~\text{and}~
\beta=\mathcal{S}_{\kappa}[\alpha]-\mathcal{D}_{\kappa}[\beta]+\dfrac{\beta}{2}~\text{on} ~\Gamma.
\end{equation}
From \eqref{Sec2:DirichletTraces}, we obtain the following $2\times 2$ system of  boundary  integral equations:
\begin{equation}\label{Sec2:IE_1}
\mathcal{Z}(\kappa)
\begin{pmatrix}
\beta\\
\alpha
\end{pmatrix}=
\begin{pmatrix}
0 \\
0
\end{pmatrix},
\end{equation}
where
\begin{equation*}
\mathcal{Z}(\kappa)=
\begin{pmatrix}
\mathcal{I}/2+\mathcal{D}_{\kappa}& -\mathcal{S}_{\kappa} \\
\mathcal{I}/2+\mathcal{D}_{\kappa_1}&- \mathcal{S}_{\kappa_1} 
\end{pmatrix}.
\end{equation*}
and $\mathcal{I}$ denotes the identity operator.
The transmission eigenvalues are $\kappa$'s satisfying \eqref{Sec2:IE_1}.
This implies that $\kappa$ is a transmission eigenvalue if zero is an eigenvalue of $\mathcal{Z}$.

We now recall some properties of the aforementioned integral operators.

\begin{lemma}[\cite{Kirsch1989, Mclean2000}]\label{lem:1}
Let $\Gamma$ be of class $C^{2,1}$. Then we have
 \begin{enumerate}
 \item[(i)] The operator $\mathcal{S}_{\kappa}: H^{1/2}(\Gamma)\to H^{3/2} (\Gamma)$ is Fredholm with index zero.
 \item[(ii)] The operator $\mathcal{D}_{\kappa}: H^{3/2}(\Gamma)\to H^{3/2} (\Gamma)$ is compact.
 \end{enumerate}
\end{lemma}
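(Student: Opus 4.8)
The plan is to reduce everything to the corresponding operators for the Laplacian ($\kappa=0$), whose mapping properties on a $C^{2,1}$ curve are classical, and then to absorb the $\kappa$-dependent correction into a smoothing, hence compact, remainder. Concretely, write $\Phi_0(x,y):=-\frac{1}{2\pi}\ln|x-y|$ for the fundamental solution of $-\Delta$ in $\mathbb{R}^2$, and let $\mathcal{S}_0$ and $\mathcal{D}_0$ denote the associated single- and double-layer operators defined exactly as in \eqref{Sec2:doubleLayer_O} with $\Phi_\kappa$ replaced by $\Phi_0$. The key analytic input is that the small-argument asymptotics of the Hankel function yield
\begin{equation*}
\Phi_\kappa(x,y)-\Phi_0(x,y)=\frac{\mathrm{i}}{4}H_0^{(1)}(\kappa|x-y|)+\frac{1}{2\pi}\ln|x-y|=:g_\kappa(x,y),
\end{equation*}
where the logarithmic singularities cancel exactly, so that $g_\kappa$ extends to a $C^\infty$ function (real-analytic in $|x-y|^2$) on a neighbourhood of the diagonal. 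First I would record this splitting together with its $\vec{\nu}(y)$-derivative analogue.

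For (i): since $g_\kappa$ is smooth, the remainder $\mathcal{S}_\kappa-\mathcal{S}_0$ has a $C^\infty$ kernel on $\Gamma\times\Gamma$, hence maps every $H^s(\Gamma)$ into $C^\infty(\Gamma)$ and is in particular compact as an operator $H^{1/2}(\Gamma)\to H^{3/2}(\Gamma)$. It then remains to treat $\mathcal{S}_0$. The Laplace single-layer operator is a strongly elliptic pseudodifferential operator of order $-1$, with positive principal symbol $1/(2|\xi|)$; by the classical theory (see \cite{Mclean2000}) it is Fredholm of index zero as a map $H^{s}(\Gamma)\to H^{s+1}(\Gamma)$ throughout the range of $s$ permitted by the $C^{2,1}$-regularity of $\Gamma$, in particular from $H^{1/2}(\Gamma)$ onto $H^{3/2}(\Gamma)$. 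Since the Fredholm index is stable under compact perturbations, $\mathcal{S}_\kappa=\mathcal{S}_0+(\mathcal{S}_\kappa-\mathcal{S}_0)$ is Fredholm of index zero, which proves (i).

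For (ii): differentiating the splitting in $\vec{\nu}(y)$ shows that $\mathcal{D}_\kappa-\mathcal{D}_0$ again has a $C^\infty$ kernel, hence is compact from $H^{3/2}(\Gamma)$ to $H^{3/2}(\Gamma)$. For $\mathcal{D}_0$ I would exploit the geometric cancellation in its kernel $-\frac{1}{2\pi}(x-y)\cdot\vec{\nu}(y)/|x-y|^2$: on a $C^2$ curve the numerator vanishes to second order as $x\to y$, so the kernel extends continuously to the diagonal, and the extra Lipschitz derivative of a $C^{2,1}$ curve upgrades this to the statement that $\mathcal{D}_0$ is smoothing of order one, i.e. bounded as a map $H^{3/2}(\Gamma)\to H^{5/2}(\Gamma)$ (\cite{Kirsch1989, Mclean2000}). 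Composing with the compact Sobolev embedding $H^{5/2}(\Gamma)\hookrightarrow H^{3/2}(\Gamma)$ (Rellich's theorem, $\Gamma$ compact) shows that $\mathcal{D}_0:H^{3/2}(\Gamma)\to H^{3/2}(\Gamma)$ is compact, and adding the compact remainder gives (ii).

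The main obstacle is the limited boundary regularity: the cleanest pseudodifferential calculus, with its symbols, ellipticity, and exact orders, is available on $C^\infty$ curves, whereas here one has only $C^{2,1}$, so both the gain of a full derivative for $\mathcal{D}_0$ up to $H^{5/2}$ and the index-zero property of $\mathcal{S}_0$ on $H^{1/2}\to H^{3/2}$ must be tracked carefully against the available smoothness; this is precisely the content supplied by \cite{Kirsch1989, Mclean2000}, which is why the lemma is cited rather than proved afresh. A secondary subtlety special to two dimensions is that $\mathcal{S}_0$ may fail to be injective when the logarithmic capacity of $\Gamma$ equals one; however, this affects only the kernel and not the index, so the index-zero conclusion in (i) remains intact.
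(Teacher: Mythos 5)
The paper does not actually prove this lemma---it is quoted verbatim from \cite{Kirsch1989, Mclean2000}---so what must be assessed is the internal correctness of your argument. Its architecture (perturb off the Laplace kernel, then use index stability and compact embeddings) is indeed the standard route taken in those references. However, your key analytic input is false: in two dimensions the logarithmic singularities of $\Phi_\kappa$ and $\Phi_0$ do \emph{not} cancel exactly. From $H_0^{(1)}=J_0+\mathrm{i}Y_0$ and $Y_0(z)=\frac{2}{\pi}\bigl(\ln\frac{z}{2}+\gamma\bigr)J_0(z)+(\text{even power series in } z)$ one gets, with $r=|x-y|$,
\begin{equation*}
\Phi_\kappa(x,y)-\Phi_0(x,y)=-\frac{1}{2\pi}\bigl(J_0(\kappa r)-1\bigr)\ln r+A(r^2),
\end{equation*}
where $A$ is entire. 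Since $J_0(\kappa r)-1=-\tfrac{1}{4}\kappa^2 r^2+O(r^4)$ is not identically zero, the difference kernel retains an $r^2\ln r$ term: it is $C^1$ near the diagonal but certainly not $C^\infty$, let alone real-analytic in $r^2$ (exact cancellation of the logarithm would force $Y_0$ to have the form $\frac{2}{\pi}\ln(z/2)+\text{analytic}$, contradicting the factor $J_0(z)$ multiplying the logarithm). Consequently the step ``$\mathcal{S}_\kappa-\mathcal{S}_0$ has a $C^\infty$ kernel, hence maps every $H^s(\Gamma)$ into $C^\infty(\Gamma)$'' fails as stated, and the same error propagates to your treatment of $\mathcal{D}_\kappa-\mathcal{D}_0$.

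The damage is local and repairable. A kernel of the form $a(x,y)\,r^2\ln r+b(x,y)$ with $a,b$ smooth defines, on a one-dimensional curve, an operator that still gains roughly three orders of Sobolev smoothness, so $\mathcal{S}_\kappa-\mathcal{S}_0:H^{1/2}(\Gamma)\to H^{3/2}(\Gamma)$ is compact; likewise, because $(x-y)\cdot\vec{\nu}(y)=O(r^2)$ on a $C^2$ curve, the $\vec{\nu}(y)$-derivative of the difference is again $O(r^2\ln r)$, so $\mathcal{D}_\kappa-\mathcal{D}_\kappa^{0}:=\mathcal{D}_\kappa-\mathcal{D}_0$ is compact on $H^{3/2}(\Gamma)$. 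With this corrected (weaker, but sufficient) smoothing statement in place of the $C^\infty$ claim, the remainder of your argument---index zero for the elliptic order $-1$ operator $\mathcal{S}_0$, the $C^{2,1}$-dependent bound $\mathcal{D}_0:H^{3/2}(\Gamma)\to H^{5/2}(\Gamma)$ from \cite{Kirsch1989}, Rellich compactness, and stability of the Fredholm index under compact perturbations---goes through, and your closing remark that possible non-injectivity of $\mathcal{S}_0$ (logarithmic capacity one) affects only the kernel and not the index is correct.
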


The following theorem follows directly from Lemma \ref{lem:1}.
\begin{theorem}\label{thm:1}
Let $\Gamma$ be of class $C^{2,1}$. Then the operator  
$\mathcal{Z}(\kappa): H^{3/2}(\Gamma)\times H^{1/2}(\Gamma)\to H^{3/2}(\Gamma)\times H^{3/2}(\Gamma)$ is Fredholm of index zero and analytic on $\kappa\in\mathbb{C}\backslash\mathbb{R}^-$.
\end{theorem}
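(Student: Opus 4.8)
The plan is to read the statement off from the block structure of $\mathcal{Z}(\kappa)$ together with Lemma~\ref{lem:1}, treating the Fredholm property and the analyticity separately. First I would confirm that $\mathcal{Z}(\kappa)$ is bounded between the stated spaces: the diagonal-type entries $\mathcal{I}/2+\mathcal{D}_\kappa$ and $\mathcal{I}/2+\mathcal{D}_{\kappa_1}$ act boundedly on $H^{3/2}(\Gamma)$ (the identity trivially, the double-layer terms by Lemma~\ref{lem:1}(ii)), while $-\mathcal{S}_\kappa$ and $-\mathcal{S}_{\kappa_1}$ send $H^{1/2}(\Gamma)$ into $H^{3/2}(\Gamma)$ by Lemma~\ref{lem:1}(i). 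Hence $\mathcal{Z}(\kappa):H^{3/2}(\Gamma)\times H^{1/2}(\Gamma)\to H^{3/2}(\Gamma)\times H^{3/2}(\Gamma)$ is well defined and bounded, and replacing $\kappa$ by $\kappa_1=\sqrt{\mu}\,\kappa$ changes none of these mapping orders.

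For the Fredholm claim I would isolate a reference operator and absorb the rest into a compact remainder, using that the Fredholm index is stable under compact perturbation. Since $\mathcal{D}_\kappa,\mathcal{D}_{\kappa_1}:H^{3/2}(\Gamma)\to H^{3/2}(\Gamma)$ are compact by Lemma~\ref{lem:1}(ii), the perturbation carrying the double-layer terms is compact and may be discarded without altering the index. This leaves the reference operator
$$\mathcal{Z}_0(\kappa)=\begin{pmatrix}\mathcal{I}/2 & -\mathcal{S}_\kappa\\ \mathcal{I}/2 & -\mathcal{S}_{\kappa_1}\end{pmatrix},$$
whose index governs that of $\mathcal{Z}(\kappa)$, and I would attempt to analyze it by an invertible row operation.

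The hard part will be this final reduction. Left-multiplying $\mathcal{Z}_0(\kappa)$ by the boundedly invertible, index-preserving operator $\begin{pmatrix}\mathcal{I}&0\\-\mathcal{I}&\mathcal{I}\end{pmatrix}$ on the target space replaces the second block-row by $(\,0,\ \mathcal{S}_\kappa-\mathcal{S}_{\kappa_1}\,)$; since $\mathcal{I}/2$ on $H^{3/2}(\Gamma)$ is invertible, the Schur-complement principle makes the index of the whole coincide with that of $\mathcal{S}_\kappa-\mathcal{S}_{\kappa_1}:H^{1/2}(\Gamma)\to H^{3/2}(\Gamma)$. Because the two fundamental solutions share the same logarithmic singularity, this difference is smoothing, i.e.\ of strictly lower order than $\mathcal{S}_\kappa$ itself, and verifying that it nonetheless yields a Fredholm operator of index zero between the mismatched factors $H^{1/2}(\Gamma)$ and $H^{3/2}(\Gamma)$ is exactly the delicate point. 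I expect this near-degeneracy of the reduced operator to be the genuine obstacle, and it is precisely what motivates the regularized Schur complement introduced in Section~\ref{sec:schur_complement}.

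Analyticity is the easier half. The kernel $\Phi_\kappa(x,y)=\tfrac{\mathrm i}{4}H_0^{(1)}(\kappa|x-y|)$ is analytic in $\kappa$ on the slit plane $\mathbb{C}\setminus\mathbb{R}^-$, the cut arising from the logarithmic branch point of $H_0^{(1)}$ at the origin, and the same holds with $\kappa_1=\sqrt{\mu}\,\kappa$ since $\sqrt{\mu}>0$ maps $\mathbb{C}\setminus\mathbb{R}^-$ into itself. I would then upgrade this pointwise kernel analyticity to operator-valued analyticity of each entry, and hence of $\mathcal{Z}(\kappa)$, by differentiating under the integral sign, the convergence of the resulting local power series being controlled by uniform bounds on the $\kappa$-derivatives of the kernel over compact subsets of $\mathbb{C}\setminus\mathbb{R}^-$.
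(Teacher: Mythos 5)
Your preliminary steps are sound: the boundedness check, the removal of the double-layer terms as a compact perturbation, the row operation, and the reduction of the index question to that of $\mathcal{S}_{\kappa}-\mathcal{S}_{\kappa_1}\colon H^{1/2}(\Gamma)\to H^{3/2}(\Gamma)$ are all correct, and your analyticity argument is also fine. The problem is the step you defer and flag as ``the delicate point'': it is not a hard step awaiting the right idea, it is false. Since $\Phi_{\kappa}$ and $\Phi_{\kappa_1}$ have the \emph{same} logarithmic singularity, the kernel of $\mathcal{S}_{\kappa}-\mathcal{S}_{\kappa_1}$ behaves like $\tfrac{\kappa^2-\kappa_1^2}{8\pi}\,|x-y|^2\ln|x-y|$ near the diagonal, and (for $\Gamma$ of class $C^{2,1}$) its second tangential derivatives have at worst logarithmic singularities; hence this operator maps $L^2(\Gamma)$ boundedly into $H^{2}(\Gamma)$, and therefore maps $H^{1/2}(\Gamma)$ \emph{compactly} into $H^{3/2}(\Gamma)$. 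A compact operator between infinite-dimensional spaces is never Fredholm: a parametrix would force the identity to be compact. Since your row operation is an isomorphism of the target space and compact perturbations preserve Fredholmness, your reduction, carried to its end, proves the opposite of the claim: $\mathcal{Z}(\kappa)\colon H^{3/2}(\Gamma)\times H^{1/2}(\Gamma)\to H^{3/2}(\Gamma)\times H^{3/2}(\Gamma)$ is \emph{not} Fredholm. The same conclusion follows without any reduction: if $(f,g)^\top=\mathcal{Z}(\kappa)(\beta,\alpha)^\top$, then $f-g=(\mathcal{D}_{\kappa}-\mathcal{D}_{\kappa_1})\beta-(\mathcal{S}_{\kappa}-\mathcal{S}_{\kappa_1})\alpha\in H^{2}(\Gamma)$ by the same kernel cancellation, so the range of $\mathcal{Z}(\kappa)$ lies in the proper dense non-closed subspace $\{(f,g): f-g\in H^{2}(\Gamma)\}$ and thus cannot be closed with finite codimension.

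So the gap is real, but it is not yours alone. The paper gives no proof of Theorem \ref{thm:1} at all---it is asserted to ``follow directly'' from Lemma \ref{lem:1}---and your attempt exposes exactly why no such direct deduction exists: Lemma \ref{lem:1} controls the four entries separately, whereas the Fredholm property of a block operator depends on how the rows couple, and here the two rows of $\mathcal{Z}(\kappa)$ agree modulo smoothing operators. The statement can be rescued only by changing the spaces, not the algebra. In the Cossonni\`ere--Haddar-type analysis behind \cite{Cosson2013, Kleefeld2013}, one keeps your row-reduced system but measures the difference row where it naturally lives: $\mathcal{S}_{\kappa}-\mathcal{S}_{\kappa_1}$ is an \emph{elliptic} pseudodifferential operator of order $-3$ (its symbol is a nonzero multiple of $(\kappa^2-\kappa_1^2)|n|^{-3}$), hence Fredholm of index zero from $H^{1/2}(\Gamma)$ to $H^{7/2}(\Gamma)$ for sufficiently smooth $\Gamma$, and the triangular structure then yields index zero for the system mapping into $H^{3/2}(\Gamma)\times H^{7/2}(\Gamma)$. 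With the target space $H^{3/2}(\Gamma)\times H^{3/2}(\Gamma)$ as printed, neither your proposal nor any other argument can establish the theorem.
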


\section{The Schur complement and recursive integral method}\label{sec:schur_complement}

We present in this section the Schur complement with regularization for the system of the boundary integral equations \eqref{Sec2:IE_1}. This leads to a nonlinear  and non-selfadjoint eigenvalue problem.  We use contour integral based on spectral projection to test if zero is an eigenvalue of the corresponding operators.

We first introduce Schur complement  for the  $2\times2$ block operator $\mathcal{Z}$. 
If zero is an eigenvalue of $\mathcal{Z}(\kappa)$ for $\kappa\in\mathbb{C}\backslash\mathbb{R}^-$, we have nontrivial solutions to the equations
\begin{align}
(\mathcal{I}/2+\mathcal{D}_{\kappa}) \beta-\mathcal{S}_{\kappa}\alpha=0,\label{eq:eq1}\\
(\mathcal{I}/2+\mathcal{D}_{\kappa_1})\beta-\mathcal{S}_{\kappa_1}\alpha=0.\label{eq:eq2}
\end{align}
If we assume that $\mathcal{S}_{\kappa_1}$ is invertible, we first
solve for $\alpha$ from \eqref{eq:eq2}, getting
\begin{equation*} 
\alpha=\mathcal{S}_{\kappa_1}^{-1}(\mathcal{I}/2+\mathcal{D}_{\kappa_1})\beta.
\end{equation*}
and  substituting this expression for $\alpha$ in the  equation \eqref{eq:eq1}, we obtain that 
\begin{equation*}
[(\mathcal{I}/2+\mathcal{D}_{\kappa})-\mathcal{S}_{\kappa}\mathcal{S}_{\kappa_1}^{-1}(\mathcal{I}/2+\mathcal{D}_{\kappa_1})]\beta=0.
\end{equation*}
If $\mathcal{S}_{\kappa_1}$  is invertible,  we call the operator
\begin{equation}\label{Shur1}
\mathcal{A}(\kappa):=(\mathcal{I}/2+\mathcal{D}_{\kappa})-\mathcal{S}_{\kappa} \mathcal{S}_{\kappa_1}^{-1} (\mathcal{I}/2+\mathcal{D}_{\kappa_1})
\end{equation}
 the Schur complement of  $\mathcal{S}_{\kappa_1} $ in $\mathcal{Z}_1(\kappa)$. 
We conclude that if zero is an eigenvalue of $\mathcal{A}(\kappa)$,
$\mathcal{Z}(\kappa)$ has an eigenvalue equal to zero. The transmission eigenvalues are $\kappa$'s satisfying $\mathcal{A}(\kappa)\beta=0$.

We remark in passing that the single  layer operator $\mathcal{S}_{\kappa_1}$ is invertible for $\Omega$ with $C^{2,1}$ boundary, if $\kappa_1^2$ is not a Dirichlet eigenvalue for $-\triangle$ in $\Omega$.
For  the above approach, we have to exclude the Dirichlet eigenvalue in $\Omega$.
To overcome this weakness, we propose schur complement with a regularization for $\mathcal{Z}$. 

We now develop a regularization technique for solving \eqref{eq:eq1}-\eqref{eq:eq2} for the case 
$\mathcal{S}_{\kappa_1}$ is not invertible.
Let $\eta>0$ be a regularization parameter.  Following the idea of the standard Tikhonov regularization, we introduce the operator
\begin{equation}\label{Shur1_Regularization}
\mathcal{A}^{\eta}(\kappa):=(\mathcal{I}/2+\mathcal{D}_{\kappa})-\mathcal{S}_{\kappa} (\eta\mathcal{I}+\mathcal{S}_{\kappa_1}^*\mathcal{S}_{\kappa_1})^{-1}\mathcal{S}_{\kappa_1}^* (\mathcal{I}/2+\mathcal{D}_{\kappa_1}),
\end{equation}
which is called the regularized  Schur complement of $\mathcal{S}_{\kappa_1}$
 in $\mathcal{Z}_1(\kappa)$. We note that $\mathcal{A}^{\eta}=\mathcal{A}$ for $\eta=0$.
According to Lemma \ref{lem:1}, we have the following Fredholm property of $\mathcal{A}^{\eta}$.

\begin{theorem}
Let $\Gamma$ be of class $C^{2,1}$ and $\eta\geq 0$. Then the operator  
$\mathcal{A}^\eta(\kappa): H^{3/2}(\Gamma)\to H^{3/2}(\Gamma)$ is Fredholm of index zero and analytic on $\kappa\in\mathbb{C}\backslash\mathbb{R}^-$.
\end{theorem}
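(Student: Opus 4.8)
The plan is to decompose $\mathcal{A}^\eta(\kappa)$ as
\[
\mathcal{A}^\eta(\kappa)=\left(\tfrac{1}{2}\mathcal{I}+\mathcal{D}_\kappa\right)-\mathcal{T}^\eta(\kappa),\qquad
\mathcal{T}^\eta(\kappa):=\mathcal{S}_\kappa\,(\eta\mathcal{I}+\mathcal{S}_{\kappa_1}^*\mathcal{S}_{\kappa_1})^{-1}\mathcal{S}_{\kappa_1}^*\left(\tfrac{1}{2}\mathcal{I}+\mathcal{D}_{\kappa_1}\right),
\]
and to establish two facts: the leading part $\tfrac{1}{2}\mathcal{I}+\mathcal{D}_\kappa$ is Fredholm of index zero on $H^{3/2}(\Gamma)$, and the remainder $\mathcal{T}^\eta(\kappa)$ is compact on $H^{3/2}(\Gamma)$. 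Because the Fredholm index is invariant under compact perturbations and an isomorphism has index zero, the difference is then Fredholm of index zero. The analyticity will be read off from the analytic dependence of the layer operators on $\kappa$ together with the analyticity of the regularized inverse.

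For the leading part, $\tfrac12\mathcal{I}$ is boundedly invertible on $H^{3/2}(\Gamma)$ and $\mathcal{D}_\kappa$ is compact there by Lemma~\ref{lem:1}(ii); hence $\tfrac12\mathcal{I}+\mathcal{D}_\kappa$ is a compact perturbation of an isomorphism and is Fredholm of index zero. The crux for the remainder is that $\mathcal{S}_\kappa$ is \emph{compact} as an operator from $H^{3/2}(\Gamma)$ into itself: the embedding $H^{3/2}(\Gamma)\hookrightarrow H^{1/2}(\Gamma)$ is compact by Rellich's theorem on the compact curve $\Gamma$, while $\mathcal{S}_\kappa\colon H^{1/2}(\Gamma)\to H^{3/2}(\Gamma)$ is bounded by Lemma~\ref{lem:1}(i), so writing $\mathcal{S}_\kappa|_{H^{3/2}}$ as the bounded map $\mathcal{S}_\kappa$ precomposed with the compact inclusion exhibits it as compact. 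It then suffices to show that the inner factor $(\eta\mathcal{I}+\mathcal{S}_{\kappa_1}^*\mathcal{S}_{\kappa_1})^{-1}\mathcal{S}_{\kappa_1}^*(\tfrac12\mathcal{I}+\mathcal{D}_{\kappa_1})$ is bounded on $H^{3/2}(\Gamma)$, for then $\mathcal{T}^\eta(\kappa)$ is the composition of a compact operator with a bounded one and is compact.

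The boundedness of the regularized inverse is where I expect the real work. For $\eta>0$ the operator $\mathcal{S}_{\kappa_1}^*\mathcal{S}_{\kappa_1}$ is compact on $H^{3/2}(\Gamma)$ (the same Rellich factorization applied to $\mathcal{S}_{\kappa_1}$ and its $L^2(\Gamma)$-adjoint), so $\eta\mathcal{I}+\mathcal{S}_{\kappa_1}^*\mathcal{S}_{\kappa_1}$ is a compact perturbation of the isomorphism $\eta\mathcal{I}$ and is Fredholm of index zero on $H^{3/2}(\Gamma)$. On $L^2(\Gamma)$ this operator is self-adjoint with $\langle(\eta\mathcal{I}+\mathcal{S}_{\kappa_1}^*\mathcal{S}_{\kappa_1})\phi,\phi\rangle\ge\eta\|\phi\|^2$, hence injective, and injectivity is inherited on the subspace $H^{3/2}(\Gamma)$. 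An injective Fredholm operator of index zero is an isomorphism, giving a bounded inverse on $H^{3/2}(\Gamma)$; since $\mathcal{S}_{\kappa_1}^*$ and $\tfrac12\mathcal{I}+\mathcal{D}_{\kappa_1}$ are bounded on $H^{3/2}(\Gamma)$ as well, the inner factor is bounded there, which completes the compactness of $\mathcal{T}^\eta(\kappa)$.

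For analyticity, the kernels of $\mathcal{S}_\kappa$ and $\mathcal{D}_\kappa$ depend analytically on $\kappa\in\mathbb{C}\backslash\mathbb{R}^-$ through $H_0^{(1)}$—the dependence already exploited in Theorem~\ref{thm:1}—and $\kappa\mapsto\kappa_1=\kappa\sqrt{\mu}$ is entire; since the inverse of an analytic family of invertible operators is analytic and finite sums and products of analytic operator families are analytic, $\mathcal{A}^\eta(\kappa)$ is analytic on $\mathbb{C}\backslash\mathbb{R}^-$. The main obstacle is the borderline case $\eta=0$, where the regularized inverse degenerates to $\mathcal{S}_{\kappa_1}^{-1}$: this factor gains no regularity but rather loses a derivative, so $\mathcal{S}_{\kappa_1}^{-1}(\tfrac12\mathcal{I}+\mathcal{D}_{\kappa_1})$ is only bounded from $H^{3/2}(\Gamma)$ into $H^{1/2}(\Gamma)$ and the compact-times-bounded argument breaks down. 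For $\eta=0$ one has $\mathcal{A}^0=\mathcal{A}$, the unregularized Schur complement, and I would instead obtain its Fredholm property from Theorem~\ref{thm:1} by the block factorization of $\mathcal{Z}(\kappa)$ relative to the invertible block $\mathcal{S}_{\kappa_1}$, which forces $\mathcal{Z}(\kappa)$ and $\mathcal{A}(\kappa)$ to share the same Fredholm index whenever $\mathcal{S}_{\kappa_1}$ is invertible.
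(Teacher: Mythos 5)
You read the paper correctly in one respect: its entire justification for this theorem is the phrase ``According to Lemma \ref{lem:1}'', so there is no detailed argument to compare against, and your Rellich-type factorization is presumably what that one-liner intends. For $\eta>0$ your Fredholm argument is correct and complete: $\mathcal{S}_\kappa$ restricted to $H^{3/2}(\Gamma)$ is compact, the middle factor is bounded by your injectivity-plus-index-zero argument, so $\mathcal{A}^\eta(\kappa)$ is a compact perturbation of $\tfrac12\mathcal{I}$. The other two claims, however, contain genuine gaps.

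Analyticity for $\eta>0$: your bounded-inverse step rests on $\langle(\eta\mathcal{I}+\mathcal{S}_{\kappa_1}^*\mathcal{S}_{\kappa_1})\phi,\phi\rangle\geq\eta\|\phi\|_{L^2}^2$, which forces $*$ to be the sesquilinear $L^2(\Gamma)$-adjoint, whose kernel is $\overline{\Phi_{\kappa_1}(x,y)}$. That family is \emph{anti}-holomorphic in $\kappa$, so the premise of ``the inverse of an analytic family of invertible operators is analytic'' is not satisfied: the middle factor behaves like the scalar model $\overline{s(\kappa)}/(\eta+|s(\kappa)|^2)$, which is nowhere holomorphic. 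Reading $*$ instead as the bilinear transpose would restore analyticity (indeed $\mathcal{S}_{\kappa_1}^*=\mathcal{S}_{\kappa_1}$, since the kernel is symmetric), but then positivity is lost, $\eta\mathcal{I}+\mathcal{S}_{\kappa_1}^2$ need not be invertible, and the boundedness half of your argument collapses. The two halves of your proof require incompatible meanings of $*$; this defect is inherited from the theorem statement itself, but a proof must resolve it rather than use both readings simultaneously.

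The case $\eta=0$: you rightly notice that the compact-times-bounded argument fails here, but the fallback you propose --- the block factorization of $\mathcal{Z}(\kappa)$ combined with Theorem \ref{thm:1} --- cannot succeed, because the conclusion is false at $\eta=0$. The logarithmic singularities of $\Phi_\kappa$ and $\Phi_{\kappa_1}$ cancel, so the kernel of $\mathcal{S}_\kappa-\mathcal{S}_{\kappa_1}$ is $O(|x-y|^2\ln|x-y|)$ and this difference maps $H^{1/2}(\Gamma)$ compactly into $H^{3/2}(\Gamma)$. Hence $\mathcal{S}_\kappa\mathcal{S}_{\kappa_1}^{-1}=\mathcal{I}+(\mathcal{S}_\kappa-\mathcal{S}_{\kappa_1})\mathcal{S}_{\kappa_1}^{-1}$ is the identity plus a compact operator on $H^{3/2}(\Gamma)$, and expanding \eqref{Shur1} gives
\begin{equation*}
\mathcal{A}^{0}(\kappa)=\bigl(\mathcal{D}_\kappa-\mathcal{D}_{\kappa_1}\bigr)-\bigl(\mathcal{S}_\kappa-\mathcal{S}_{\kappa_1}\bigr)\mathcal{S}_{\kappa_1}^{-1}\bigl(\tfrac12\mathcal{I}+\mathcal{D}_{\kappa_1}\bigr),
\end{equation*}
every term of which is compact on $H^{3/2}(\Gamma)$ by Lemma \ref{lem:1}(ii) and the smoothing property above. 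A compact operator on an infinite-dimensional space is never Fredholm, so $\mathcal{A}^{0}(\kappa)$ cannot be Fredholm of index zero. Worse for your fallback, the same Schur factorization run in reverse then shows that $\mathcal{Z}(\kappa)$ is not Fredholm on $H^{3/2}(\Gamma)\times H^{1/2}(\Gamma)$ whenever $\mathcal{S}_{\kappa_1}$ is invertible, so Theorem \ref{thm:1} is not available as a black box either. What your proposal actually establishes --- and, as far as I can see, all that is true in the statement --- is the index-zero Fredholm property for $\eta>0$.
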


In the following part, we present the method to test if zero is an eigenvalue of $A^{\eta}(\kappa)$. 
Note that $\mathcal{A}^\eta(\kappa)\beta=0$ is a  nonlinear integral eigenvalue problem and the 
discretization for the integral equations by the Nystr\"{o}m  method leads to a dense matrices.
We shall use  the recursive integral method to test  that  zero is an eigenvalue of $\mathcal{A}^{\eta}(\kappa)$ or not, and recall that method as follows.
We define the resolvent set of $\mathcal{A}^{\eta}(\kappa)$ by
\begin{equation*}
\rho(\mathcal{A}^{\eta}(\kappa)):=\{z\in\mathbb{C}:(z\mathcal{I}-\mathcal{A}^{\eta}(\kappa))^{-1} \in\mathit{B}(\mathbb{X}) \},
\end{equation*}
and its spectrum $\sigma(\mathcal{A}^{\eta}(\kappa))$, where $\mathbb{X}$ is a Banach space. Let the spectral projection $\mathcal{P}$ associated with  $\mathcal{A}^{\eta}(\kappa)$ and zero denote by
\begin{equation}\label{eq:proj}
\mathcal{P}:=\dfrac{1}{2\pi{\rm i}}\int_{\gamma}(z\mathcal{I}-\mathcal{A}^{\eta}(\kappa))^{-1}{\rm d}z,
\end{equation}
where $\gamma$ is a closed rectifiable curve on the complex plane in $\rho(\mathcal{A}^{\eta}(\kappa))$ enclosing zero, but no any other point in  $\sigma(\mathcal{A}^{\eta}(\kappa))$. If there are no eigenvalues inside $\gamma$, we  have that $\mathcal{P}f=0$ for $f\in\mathbb{X}$. Let $f$ be randomly chosen and $\gamma$ be a circle with small diameter. $\|\mathcal{P}^2f\|$ is used to decide whether zero is an eigenvalue of $\mathcal{A}^{\eta}(\kappa)$ or not, since $\mathcal{P}^2=\mathcal{P}$. Hence, we need not to compute the eigenvalues of  $\mathcal{A}^{\eta}(\kappa)$,  and compare them with zero.

\section{The Nystr\"{o}m  discretization}\label{sec:nystrom}

We present in this section the Nystr\"{o}m  discretization of  the operator $\mathcal{Z}(\kappa)$  and the spectral projection $\mathcal{P}$ for completeness. We refer the readers to  \cite{Kress1995, Kress2014} for more details on the Nystr\"{o}m  discretization for domains with smooth boundary, and \cite{Dom2016} for Lipschitz domains.

We first parametrize the boundary integral operators $\mathcal{S}_\kappa$ and $\mathcal{D}_\kappa $. We assume that the boundary curve $\Gamma$ is described by a $2\pi$-periodic parametric representation of the form
\begin{equation*}
\Gamma:=\{z(t):=(z_1(t),z_2(t))^\top: t\in[0,2\pi]\}.
\end{equation*}  
Let $J_n$ denote the Bessel function of the first kind of order $n$. The parameterized operator $\mathcal{S}_\kappa$ is
\begin{equation*}
(\mathcal{S}_\kappa[\phi])(s)=\int_0^{2\pi}K^S(s,t;\kappa)\varphi(t)\,{\rm d}t,~\text{for}~s\in[0,2\pi],
\end{equation*}
where $\varphi(t)=\phi(z(t))$ and
\begin{equation*}
K^S(s, t;\kappa)=\dfrac{\rm i}{4}H_0^{(1)}(\kappa|z(s)-z(t)|)=K^S_1(s, t;\kappa)\ln\left\{4\sin^2{\dfrac{s-t}{2}}\right\}+K^S_2(s, t;\kappa) 
\end{equation*}
with
\begin{align*}
K^S_1(s, t;\kappa)= & -\dfrac{1}{4\pi}J_0(\kappa|z(s)-z(t)|)|z'(t)|,\\
K^S_2(s, t;\kappa)= & K^S(s, t;\kappa)-K^S_1(s, t;\kappa)\ln\left\{4\sin^2\dfrac{s-t}{2}\right\}.
\end{align*}
Note that 
\begin{equation*}
K^S_1(t,t;\kappa)=-\dfrac{1}{4\pi}|z'(t)|~\text{and}~K^S_2(t,t;\kappa)=\left(\dfrac{\rm i}{4}-\dfrac{\gamma}{2\pi}-\dfrac{1}{4\pi}\left(\dfrac{\kappa^2}{4}|z'(t)|^2\right) \right)|z'(t)|,
\end{equation*}
with Euler's constant $\gamma$. 

Recalling the definition \eqref{Sec2:doubleLayer_O}, we notice that
the kernel of  $\mathcal{D}_\kappa$ has singularity at the corners and the definition of that integral is understood in the sense of Cauchy principal value integral. We split the kernel into smooth and singular components. The parameterized operator $\mathcal{D}_\kappa$ is
\begin{align*}
(\mathcal{D}_\kappa[\phi])(s)=&\int_0^{2\pi}(K^D(s, t;\kappa)-K^D(s, t;0))\varphi(t){\rm d}t\\
&+\int_0^{2\pi}K^D(s,t;0)(\varphi(t)-\varphi(s)){\rm d}t\\
&+\varphi(s)\int_0^{2\pi}K^D(s, t;0){\rm d}t, 
~\text{for}~s\in[0,2\pi],
\end{align*}
where
\begin{align*}
K^D(s, t;\kappa) = & \dfrac{\rm i\kappa}{4}\dfrac{(z(s)-z(t))\cdot\vec{\nu}(z(t))}{|z(s)-z(t)|}H_1^{(1)}(\kappa|z(s)-z(t)|)|z'(t)|\\
= & K^D_1(s, t)\ln\left\{4\sin^2\dfrac{s-t}{2}\right\}+K^D_2(s,t), 
\end{align*}
with
\begin{align*}
K^D_1(s, t;\kappa)= & -\dfrac{\kappa}{4\pi}\dfrac{(z(s)-z(t))\cdot\vec{\nu}(z(t))}{|z(s)-z(t)|}J_1(\kappa|z(s)-z(t)|)|z'(t)|,\\
K^D_2(s, t;\kappa)= & K^D(s, t;\kappa)-K^D_1(s, t;\kappa)\ln\left\{4\sin^2\dfrac{s-t}{2}\right\},
\end{align*}
and 
\begin{align*}
K^D(s, t;0)= & \dfrac{1}{2\pi}\dfrac{(z(s)-z(t))\cdot\vec{\nu}(z(t))}{|z(s)-z(t)|^2}|z'(t)|
\end{align*}
Note that 
\begin{equation*}
K^D_2(t, t;\kappa)=K^D(t, t;0)=\dfrac{1}{4\pi}\dfrac{z''(t)\cdot\vec{\nu}(z(t))}{|z'(t)|}.
\end{equation*}

We now approximate the integral operators $\mathcal{Z}(\kappa)$ as follows. For the $2\pi$-periodic integrands we choose an equidistant set of knots $t_j:=\pi j/n$ with $n\in\mathbb{N}$ and $j=0,1,\ldots,2n-1$. We approximate the operators $\mathcal{S}_\kappa$ and $\mathcal{D}_\kappa$ by 
 the quadratures
\begin{equation}\label{Rule1}
\int_{0}^{2\pi}\varphi(t)\,{\rm d}t\approx\dfrac{\pi}{n}\sum\limits_{j=0}^{2n-1}\varphi(t_j),
\end{equation}
\begin{equation}\label{Rule2}
\int_{0}^{2\pi}\varphi(t)\ln\left\{\{4\sin^2\dfrac{s-t}{2}\right\}\, {\rm d}t\approx\sum\limits_{j=0}^{2n-1}R_j^{(n)}(s)\varphi(t_j),
\end{equation}
with
\begin{equation*}
R_j^{(n)}(s)=\dfrac{2\pi}{n}\sum_{\ell=1}^{n-1}\dfrac{1}{\ell}\cos{(\ell(s-t_j))}-\dfrac{\pi}{n^2}\cos{(n(s-t_j))}.
\end{equation*}
The sequences of numerical integration operators for $\mathcal{S}_{\kappa_1}$, $\mathcal{S}_{\kappa}$, $\mathcal{D}_{\kappa_1}$ and $\mathcal{D}_{\kappa}$
are denoted by 
$\mathcal{S}_{\kappa_1,n}$, $\mathcal{S}_{\kappa,n}$, 
$\mathcal{D}_{\kappa_1,n}$ and $\mathcal{D}_{\kappa,n}$.
We then approximate $\mathcal{Z}(\kappa)$  by a sequence of numerical integration operators
\begin{equation*}
\mathcal{Z}_{n}(\kappa)=
\begin{pmatrix}
\mathcal{I}_n/2+\mathcal{D}_{\kappa,n}& -\mathcal{S}_{\kappa,n} \\
\mathcal{I}_n/2+\mathcal{D}_{\kappa_1,n}&-\mathcal{S}_{\kappa_1,n} 
\end{pmatrix}.
\end{equation*}
The transmission eigenvalues are  approximated by $\kappa$'s satisfying 
\begin{equation}\label{Sec4}
\mathcal{Z}_{n}(\kappa)
\begin{pmatrix}
\beta\\
\alpha
\end{pmatrix}=
\begin{pmatrix}
0 \\
0
\end{pmatrix}.
\end{equation}

In passing, we comment on that the eigenpairs $\mathcal{Z}_{n}$  and $\mathcal{Z}$ are related to each other  in some sense.  We denote the space of  trigonometric polynomials by  $$\mathbb{T}_n:={\rm span}\{1,\cos{nt},\cos{mt},\sin{mt}:m=1,2,\ldots,n-1\}.$$ For $\mathbf{g}_n:=[g_0,g_1,\ldots,g_{2n-1}]^\top\in\mathbb{C}^{2n}$, there exists a unique  trigonometric polynomial $v_n\in\mathbb{T}_n$ of the form
$$
v_n(t):=\sum_{j=0}^{n}a_j\cos{jt}+\sum_{j=1}^{n-1}b_j\sin{jt},
$$
satisfying $v_n(t_j)=g_j$ ($j=0$, $1$, $\ldots$, $2n-1$) and $v_n\in {\rm C}[0,2\pi]$. This implies that  $ \mathcal{Z}_{n} $  and $\mathcal{Z}$ are bounded operators on $[{\rm C}[0,2\pi]]^2$.
Applying the pointwise convergence of the Nystr\"{o}m method \cite{Kress1989}, we obtain
 the spectral properties of  $ \mathcal{Z}_{n} $  and $\mathcal{Z}$ in $[{\rm C}[0,2\pi]]^2$ 
in the following theorem.

\begin{theorem}\label{thm:sec4}
If $\kappa\in\mathbb{C}\backslash\mathbb{R}^-$ is not an eigenvalue of $\mathcal{Z}$, then there exists a positive integral $n_0$, such that for all $n\in\mathbb{N}$ with $n>n_0$,
$\kappa$ is also not an eigenvalue of $\mathcal{Z}_{n}$.
\end{theorem}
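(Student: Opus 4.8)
The plan is to read the statement as a \emph{no-spurious-eigenvalue} (absence of spectral pollution) result: saying that $\kappa\in\mathbb{C}\setminus\mathbb{R}^-$ is not an eigenvalue of $\mathcal{Z}$ means that $\mathcal{Z}(\kappa)$ has trivial null space on $X:=[\mathrm{C}[0,2\pi]]^2$, and the goal is to show that $\mathcal{Z}_n(\kappa)$ is injective for all large $n$, so that the discrete problem \eqref{Sec4} admits only the trivial solution. The natural tool is the collectively compact operator approximation theory of Anselone in the form tailored to Nystr\"om methods in \cite{Kress1989}, which is precisely the machinery that rules out spurious spectra for quadrature discretizations of integral operators.

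First I would isolate the two structural facts that drive the argument. Because the quadrature rules \eqref{Rule1}--\eqref{Rule2} act only on the integral kernels and reproduce the identity exactly, the difference $\mathcal{Z}(\kappa)-\mathcal{Z}_n(\kappa)$ is the block operator whose entries are $\mathcal{S}_\kappa-\mathcal{S}_{\kappa,n}$, $\mathcal{D}_\kappa-\mathcal{D}_{\kappa,n}$ and their $\kappa_1$-counterparts, with no contribution from the $\mathcal{I}/2$ terms. The weakly (logarithmically) singular single-layer kernel and the continuous double-layer kernel make $\mathcal{S}_\kappa,\mathcal{D}_\kappa$ compact on $\mathrm{C}[0,2\pi]$, and the convergence theory of the logarithmic quadrature then yields (i) pointwise convergence $\mathcal{Z}_n(\kappa)\phi\to\mathcal{Z}(\kappa)\phi$ for every $\phi\in X$, and (ii) collective compactness of the integral blocks $\{\mathcal{Z}_n(\kappa)-\mathcal{B}\}_n$, where $\mathcal{B}$ is the fixed operator built from the $\mathcal{I}/2$ entries. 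From (i)--(ii) one obtains the norm conditions $\|(\mathcal{Z}-\mathcal{Z}_n)\mathcal{Z}_n\|\to 0$ and $\|(\mathcal{Z}-\mathcal{Z}_n)\mathcal{Z}\|\to 0$ that the perturbation theorem requires.

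The mechanism I would then run is a compactness--contradiction. Assume no such $n_0$ exists; then along a subsequence there are unit vectors $\phi_n=(\beta_n,\alpha_n)$ with $\mathcal{Z}_n(\kappa)\phi_n=0$, i.e. $\mathcal{B}\phi_n=-(\mathcal{Z}_n(\kappa)-\mathcal{B})\phi_n$. Collective compactness lets me extract a convergent subsequence of the integral-block images, pointwise convergence identifies the limit with $-\mathcal{B}\phi$ for a limit $\phi$, and passing to the limit in $\mathcal{Z}_n(\kappa)\phi_n=0$ produces $\mathcal{Z}(\kappa)\phi=0$; injectivity of $\mathcal{Z}(\kappa)$ then forces $\phi=0$, and the contradiction is reached once one shows $\phi\neq 0$.

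The hard part will be exactly the non-standard, non-second-kind structure. Since the block $\mathcal{B}$ has a vanishing column, $\mathcal{Z}(\kappa)$ is \emph{not} of the form (boundedly invertible)~$+$~(compact), so the classical theorem for $\mathcal{I}-K$ does not apply verbatim, and in the compactness step one cannot immediately control the Neumann component $\alpha_n$ from the compact operators alone (this reflects the mixed regularity $\beta\in H^{3/2}$, $\alpha\in H^{1/2}$ of Lemma \ref{lem:1}, which is lost in $\mathrm{C}$). My plan to overcome it is to route the argument through the Schur reduction of Section \ref{sec:schur_complement}: the effective equation for $\beta$, namely $\mathcal{A}(\kappa)\beta=0$ in \eqref{Shur1} (regularized to $\mathcal{A}^\eta(\kappa)$ via \eqref{Shur1_Regularization} where the single-layer block is not invertible), \emph{is} genuinely of the form $\mathcal{I}/2+(\text{compact})$, so the collectively compact theory applies cleanly to it; I would show that a null vector of the discrete $\mathcal{Z}_n(\kappa)$ yields a null vector of the discrete Schur complement, apply the preservation-of-injectivity theorem there, and transfer the conclusion back. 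The remaining technical work---establishing collective compactness of the logarithmic-kernel Nystr\"om approximations and the uniform boundedness of the regularized inverse block $(\eta\mathcal{I}+\mathcal{S}_{\kappa_1,n}^*\mathcal{S}_{\kappa_1,n})^{-1}\mathcal{S}_{\kappa_1,n}^*$---is routine given \cite{Kress1989}, but it is where the estimates must be made precise.
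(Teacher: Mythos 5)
Your diagnosis of the structural obstruction is exactly right: $\mathcal{Z}(\kappa)$ is not of the form ``invertible plus compact'' on $[{\rm C}[0,2\pi]]^2$ (the block built from the $\mathcal{I}/2$ entries has a vanishing column), so the classical Anselone theorem does not apply verbatim, and in a compactness--contradiction argument the component $\alpha_n$ cannot be recovered. But the fix you propose --- routing through the Schur complement --- does not close this gap, for two independent reasons. First, the reduction \eqref{Shur1} requires $\mathcal{S}_{\kappa_1}$ to be invertible, i.e.\ you must exclude the case that $\kappa_1^2$ is a Dirichlet eigenvalue of $-\Delta$ in $\Omega$; Theorem \ref{thm:sec4} carries no such hypothesis. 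The regularized substitute \eqref{Shur1_Regularization} cannot repair this, because for $\eta>0$ the null spaces of $\mathcal{A}^{\eta}(\kappa)$ and $\mathcal{Z}(\kappa)$ are no longer in correspondence: if $\mathcal{Z}(\kappa)(\beta,\alpha)^{\top}=0$, then using \eqref{eq:eq1}--\eqref{eq:eq2} one computes
\begin{equation*}
\mathcal{A}^{\eta}(\kappa)\beta=\eta\,\mathcal{S}_{\kappa}\bigl(\eta\mathcal{I}+\mathcal{S}_{\kappa_1}^*\mathcal{S}_{\kappa_1}\bigr)^{-1}\alpha,
\end{equation*}
which is nonzero in general; and precisely in the situation where regularization is needed ($\alpha$ in or near the null space of $\mathcal{S}_{\kappa_1}$) this discrepancy is $O(1)$, not $O(\eta)$. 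The same computation at the discrete level shows that a null vector of $\mathcal{Z}_n$ need not produce a null vector of $\mathcal{A}^{\eta}_{n}$, so injectivity of $\mathcal{A}^{\eta}_{n}$ transfers nothing back to $\mathcal{Z}_n$. Second, even in the invertible case, the step you call ``routine given \cite{Kress1989}'' is in fact the hard part: that theory covers second-kind operators under collectively compact, pointwise-convergent quadrature approximations, whereas $\mathcal{A}^{0}_{n}$ contains $\mathcal{S}_{\kappa_1,n}^{-1}$, the inverse of a Nystr\"om discretization of a \emph{first-kind}, compact, logarithmic-kernel operator. On ${\rm C}[0,2\pi]$, where your argument lives, $\mathcal{S}_{\kappa_1}$ is compact, so $\mathcal{S}_{\kappa_1}^{-1}$ is unbounded; $\mathcal{S}_{\kappa_1,n}$ is finite-rank, hence not invertible as an operator at all, and its matrix inverses have norms growing with $n$. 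Neither the pointwise convergence $\mathcal{A}^0_n\to\mathcal{A}$ nor the collective compactness of the perturbations follows from the cited theory; establishing them is a stability question for first-kind quadrature methods and would require a separate, genuinely nontrivial argument.

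For comparison, the paper's proof never touches the Schur complement: it uses injectivity together with the Fredholm property (Theorem \ref{thm:1}) to get a bounded $\mathcal{Z}^{-1}$, assumes the smallness $\|\mathcal{Z}^{-1}\|^2\|(\mathcal{Z}-\mathcal{Z}_n)\mathcal{Z}\|<1/2$ for large $n$, and then shows by the triangle inequality that $\|\mathcal{Z}^{-1}\mathcal{Z}_n\mathcal{Z}\xi_n\|_\infty\geq 1/2$ for normalized $\xi_n$ built from trigonometric polynomials $v_n\in\mathbb{T}_n$, so that $\mathcal{Z}_n(\mathcal{Z}\xi_n)\neq0$. The restriction to $\mathbb{T}_n$ is how the paper (implicitly) deals with the very obstruction you identified: on all of $[{\rm C}[0,2\pi]]^2$ the assertion is actually false, since $\mathcal{Z}_n(0,\alpha)^{\top}=0$ for any $\alpha\neq0$ vanishing at the quadrature nodes, so the discrete eigenvalue problem must be read on the interpolation space (equivalently, on nodal vectors in $\mathbb{C}^{2n}$). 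If you want to salvage your plan, the correct repair is not the Schur reduction but this restriction: work on $[\mathbb{T}_n]^2$, where the rules \eqref{Rule1}--\eqref{Rule2} enjoy exactness and superconvergence properties that make quantities like $(\mathcal{Z}-\mathcal{Z}_n)\mathcal{Z}\xi_n$ small --- which is the estimate both your argument and the paper's ultimately rest on.
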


\begin{proof}  We assume that  $\kappa\in\mathbb{C}\backslash\mathbb{R}^-$ is not an eigenvalue of $\mathcal{Z}$. This implies that $\mathcal{Z}^{-1}$ is bounded. For any $v \in [{\rm C}[0,2\pi]]^2$ with $\|v\|_\infty=1$, we define $\xi:=v/\|\mathcal{Z}v\|_\infty$.  Note that $\|\xi\|_\infty\leq\|\mathcal{Z}^{-1}\|$. There exists a positive integer $n_0$ such that for all $n\in\mathbb{N}$ with $n>n_0$,
\begin{align*}
	\|\mathcal{Z}\xi-\mathcal{Z}^{-1} \mathcal{Z}_{n}\mathcal{Z}\xi\|_\infty =&
	\|\mathcal{Z}^{-1}(\mathcal{Z}- \mathcal{Z}_{n})\mathcal{Z}\xi\|_\infty \\
	\leq & \|\mathcal{Z}^{-1}\|\|(\mathcal{Z}- \mathcal{Z}_{n})\mathcal{Z}\|\|\xi\|_\infty \\
	\leq &\|\mathcal{Z}^{-1}\|^2\|(\mathcal{Z}- \mathcal{Z}_{n})\mathcal{Z}\| \\
	< & \frac12.
\end{align*}
For any $v_n\in\mathbb{T}_n$ with $\|v_n\|_\infty=1$, let $\xi_n:=v_n/\|\mathcal{Z}v_n\|_\infty$. This leads to that $\|\mathcal{Z}\xi_n\|_\infty=1$. We obtain that there 
exists a positive integer $n_0$ such that for all $n\in\mathbb{N}$ with $n>n_0$
\begin{equation*}
\| \mathcal{Z}^{-1} \mathcal{Z}_{n}\mathcal{Z}\xi_n\|_\infty\geq \|\mathcal{Z}\xi_n\|_\infty- \|\mathcal{Z}\xi_n- \mathcal{Z}^{-1} \mathcal{Z}_{n}\mathcal{Z}\xi_n\|_\infty\geq \frac12.
\end{equation*}
This yields that $\kappa$ is  not an eigenvalue of $\mathcal{Z}_{n}$ and completes the proof.
\end{proof}

We conclude  from Theorem \ref{thm:sec4}  that 
$\mathcal{Z}_{n}$ is free of spurious eigenvalues  of $\mathcal{Z}$ for $n$ large enough. 
We determine the transmission eigenvalues by \eqref{Sec4}. 
The Schur complement of $\mathcal{S}_{\kappa_1,n}$
 in $\mathcal{Z}_{n}$ with regularization is given by
\begin{equation*}
\mathcal{A}_{n}^{\eta}(\kappa):=(\mathcal{I}/2+\mathcal{D}_{\kappa,n})-\mathcal{S}_{\kappa,n} (\eta\mathcal{I}+\mathcal{S}_{\kappa_1,n}^*\mathcal{S}_{\kappa_1,n})^{-1}\mathcal{S}_{\kappa_1,n}^* (\mathcal{I}/2+\mathcal{D}_{\kappa_1,n}),
\end{equation*}
where $\eta>0$.
For the case $\mathcal{S}_{\kappa_1,n}$  is invertible, the Schur complement of  $\mathcal{S}_{\kappa_1,n} $ in $\mathcal{Z}_{n}$ is given by
\begin{equation*}
\mathcal{A}_{n}^0(\kappa):=(\mathcal{I}/2+\mathcal{D}_{\kappa,n})-\mathcal{S}_{\kappa,n} \mathcal{S}_{\kappa_1,n}^{-1} (\mathcal{I}/2+\mathcal{D}_{\kappa_1,n}).
\end{equation*}

We then consider the Nystr\"{o}m  discretization for the domain with corners. 
We assume that the domain $\Omega$ has corners at $z(T_j)$ for $j=1,2,\ldots m$, where
$0\leq T_1<T_2<\cdots<T_m<2\pi$.  $z_1$ and $z_2$ are smooth with $[z_1'(t)]^2+[z_2'(t)]^2>0$ 
on each interval  $t\in[T_j,T_{j+1}]$ for $j=1,2,\ldots,m-1$. We replace the equidistant mesh by 
a sigmoidal-graded mesh through substituting a new variable based on the sigmoid transform \cite{Cosson2013, Kress1990}.
Let $T_0=0$ and $T_{m+1}=2\pi$.  For $j=0,1,2,\ldots,m$,  we define the sigmoid transform by
\begin{equation*}
w(s):=\dfrac{T_{j+1}[v(s)]^p+T_j[1-v(s)]^p}{[v(s)]^p+[1-v(s)]^p},~s\in [T_j,T_{j+1}],
\end{equation*}
\begin{equation*}
v(s)=\left(\dfrac{1}{p}-\dfrac{1}{2}\right)\left(\dfrac{T_j+T_{j+1}-2s}{T_{j+1}-T_j}\right)^3+
\dfrac{1}{2}\dfrac{2s-T_j-T_{j+1}}{T_{j+1}-T_j}+\dfrac{1}{2},
\end{equation*}
where $p\geq2$. By a change of variables $t\mapsto w(t)$, we obtain the new parametrization
\begin{equation*}
\Gamma:=\{z(w(t)):=(z_1(w(t)),z_2(w(t)))^\top: t\in[0,2\pi]\}.
\end{equation*}  
With substitution of the new parametrization above, we obtain the sequences of numerical integration operators for $\mathcal{S}_{\kappa_1}$, $\mathcal{S}_{\kappa}$, $\mathcal{D}_{\kappa_1}$ and $\mathcal{D}_{\kappa}$ based on the equidistant mesh, which like the domain with smooth boundary.

We next approximate the spectral projection $\mathcal{P}$ defined by \eqref{eq:proj}.
We choose $\gamma$ as a small circle centered  at the origin with radius $r\in(0,1)$. We substitute
\begin{equation*}
\gamma:=\{z=r{\rm e}^{{\rm i }\theta}:~\theta\in[0,2\pi]\}
\end{equation*}
to obtain that 
\begin{equation*}
\mathcal{P}=\dfrac{1}{2\pi}\int_{0}^{2\pi}r{\rm e}^{{\rm i }\theta}(r{\rm e}^{{\rm i }\theta}\mathcal{I}-\mathcal{A}^{\eta}(\kappa))^{-1}{\rm d}\theta.
\end{equation*}
The  quadrature rule \eqref{Rule1} is employed for the numerical computation of  $\mathcal{P}$.
For fixed $m\in\mathbb{N}$, and $\vec{f}\in\mathbb{C}^{2n}$, the approximation $\mathcal{P}_m$ is computed by the quadrature rule
\begin{equation}
\mathcal{P}_m\vec{f}:=\dfrac{1}{2m}\sum_{j=0}^{2m-1}r{\rm e}^{{\rm i }\theta_j}\vec{x}_j,
\end{equation}
where $\theta_j:=\pi j/m$ for $j=0$, $1$, $\ldots$, $2m-1$ are quadrature points, and $\vec{x_j}$ are the solutions of the following linear system
\begin{equation*}
(r{\rm e}^{{\rm i }\theta_j}\mathcal{I}-\mathcal{A}_n^{\eta}(\kappa))\vec{x}_j=\vec{f}.
\end{equation*} 
 We finally present the algorithm for testing whether zero is an eigenvalue of  $A_{n}^{\eta}$ or not.
 \begin{algorithm}
 \renewcommand{\thealgorithm}{}
 \caption{Numerical computation of the transmission eigenvalues} 
\begin{enumerate}
 \item[Step 1] Choose $\kappa\in (a,b)$, which is  an interval of wavenumbers.
 \item[Step 2] For fixed $n\in\mathbb{N}$, calculate $\mathcal{S}_{\kappa_1,n}$, $\mathcal{S}_{\kappa,n}$, $\mathcal{D}_{\kappa_1, n}$ and  $\mathcal{D}_{\kappa, n}$.
\item [Step 3] Choose $\eta\in[0,1)$ to obtain  $\mathcal{A}_{n}^{\eta}(\kappa)$, where the regularization parameter $\eta>0$ for the case that  the condition number  of  $\mathcal{S}_{\kappa_1,n}$ is large.
\item [Step 4] For $m\in\mathbb{N}$, $r\ll 1$ and a random $\vec{f}\in\mathbb{C}^{2n}$, compute
\begin{equation*}
{\rm  RIM}_m(\kappa):=\left\|\mathcal{P}_m\left[\dfrac{\mathcal{P}_mf}{\|\mathcal{P}_mf\|}\right]\right\|.
\end{equation*}
\item [Step 5] Decide if $\gamma$ contains an eigenvalue  and $\kappa$ is a transmission eigenvalue.
 \end{enumerate}
\end{algorithm}


\section{Numerical results}\label{sec:numerical_example}

We shall illustrate in this section the computation of interior transmission eigenvalues by using \eqref{Shur1}. The index of refraction is chosen as $\mu=16$.

We start with an interval $(a, b)$ of wavenumbers and uniformly divide it into $N$ subintervals. For each wavenumber, the boundary integral operators are discretized with $n=32$, namely, 64 quadrature nodes over $[0, 2\pi]$. We set $m=64$.

\begin{example}
Let $\Omega$ be a disk with radius $1/2$. In this case, the exact transmission eigenvalues are $\kappa$'s such that \cite{colton2010analytical, Fang2016}
\begin{equation*}
J_1(\kappa/2)J_0(2\kappa)-4J_0(\kappa/2)J_1(2\kappa)=0,
\end{equation*}
and 
\begin{equation*}
J_{m-1}(\kappa/2)J_m(2\kappa)-4J_m(\kappa/2)J_{m-1}(2\kappa)=0,
\end{equation*}
for $m\in\mathbb{N}$. The exact values in $[1.5, 5]$ are given by
\begin{equation*}
\kappa_1=1.9880,~\kappa_2=2.6129,~\kappa_3=3.2267,~\kappa_4=3.7409,~\kappa_5=3.8264,
\end{equation*}
\begin{equation*}
\kappa_6=4.2958,~\kappa_7=4.4154,~\kappa_8=4.9418,~\kappa_9=4.9959.
\end{equation*}
\end{example}

The numerical results are presented in Figures \ref{fig:disk_r}--\ref{fig:disk_complex}. We mark each location of the  exact eigenvalues by  a red line. We first  choose the interval to be $[1.6, 2.2]$  with dividing it into $100$ subintervals, where the value of ${\rm  RIM}_m(\kappa)$ is plotted in Figure \ref{fig:disk_r} with radii $r=0.005, 0.003$ and $0.001$. We see that the result for the circle with radius $r=0.001$ is better than the other two cases. This implies that the effectiveness of recursive integral method is affected by the radius of the circle. We shall choose $r<0.05$ in the following examples. We choose the interval to be  $[2.3, 2.8]$ and $[3, 3.5]$ with dividing it into $100$ subintervals in Figure \ref{fig:disk_interval}. We choose the interval to be  $[3.5, 4]$ with dividing it into 100 subintervals, and $[4, 5]$ with dividing it into 200 subintervals in Figure \ref{fig:disk_interval}.
 We see that the value of ${\rm  RIM}_m(\kappa)$ is zero except the location near the eigenvalues.
 This  concurs with the theoretical estimate. We finally search for transmission eigenvalues in the complex plane $\mathbb{C}$. The real part is seted in the interval $[4.85, 4.95]$ and divided it into $20$ subinterval. The imaginary part  is chosen in the intervals $[0.5,0.7]$ and $[-0.7,-0.5]$, where each interval is divided into $200$ subintervals. The result are presented in Figure \ref{fig:disk_complex}. We find that there exist a pair of complex eigenvalues around $k=4.90\pm0.58{\rm i}$.

\begin{figure}
 \centering
  \subfigure[]{\includegraphics[clip, trim = 0 220 0 230, width=.48\textwidth]{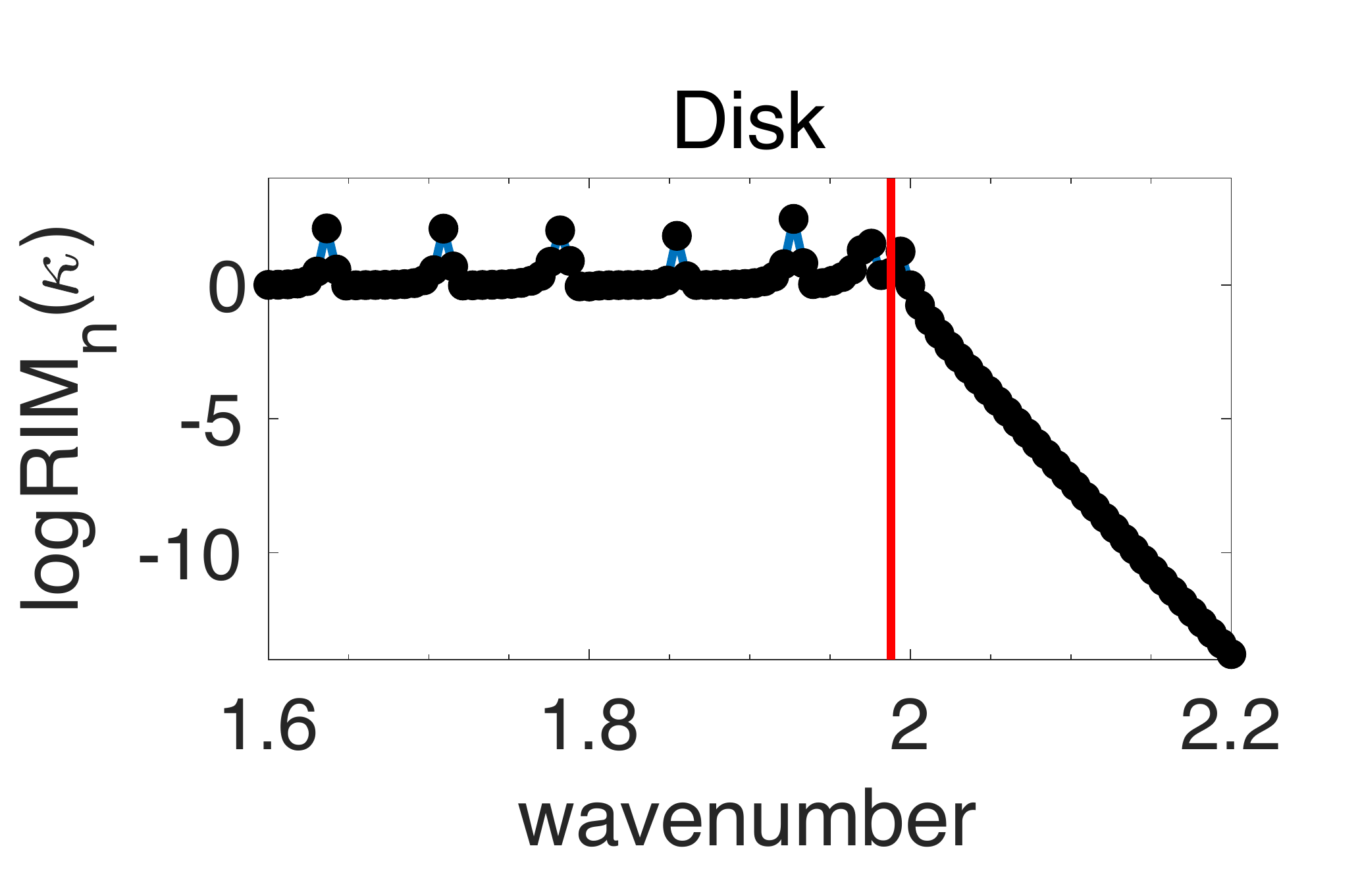}}
  \subfigure[]{\includegraphics[clip, trim = 0 220 0 230, width=.48\textwidth]{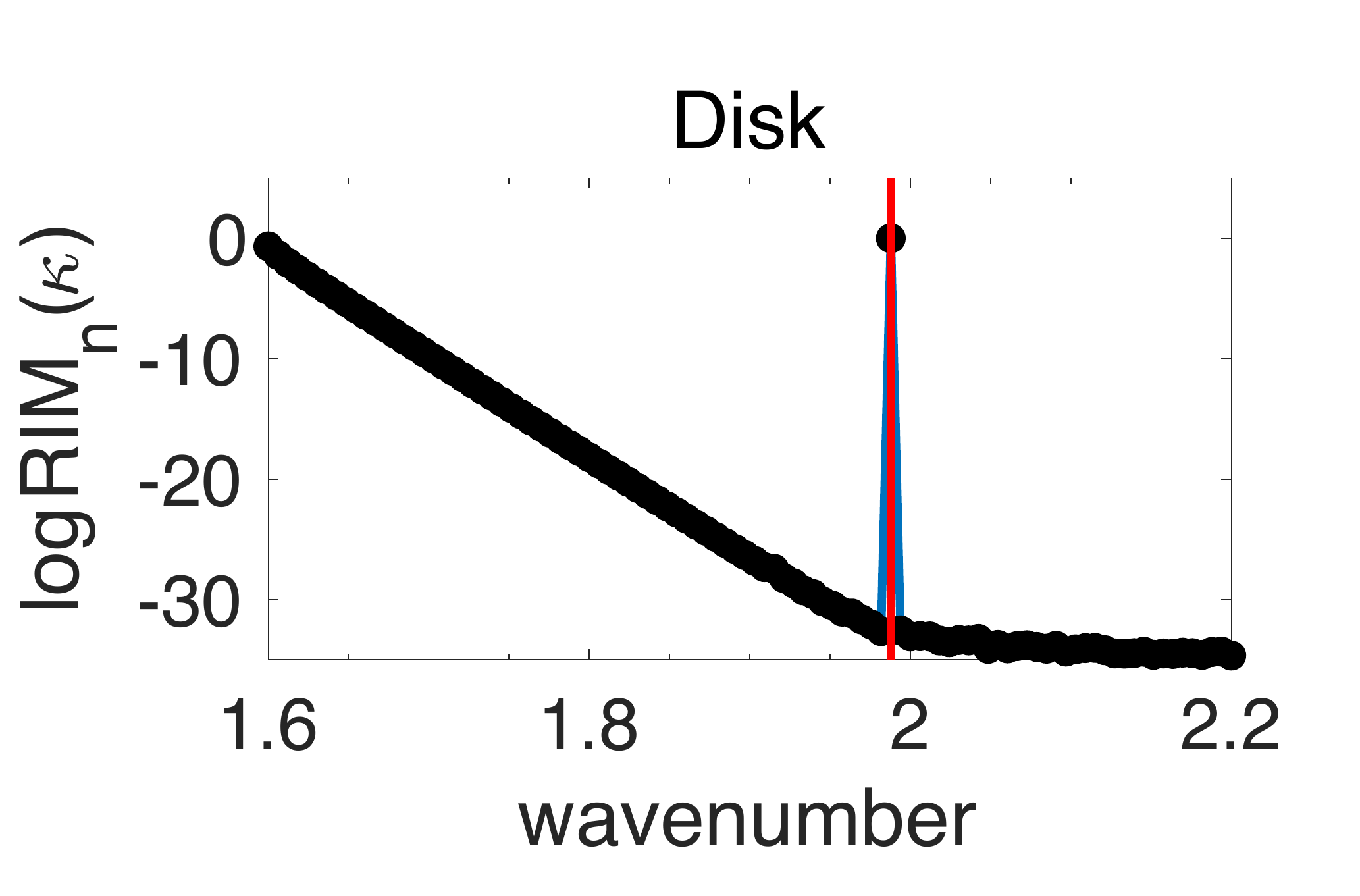}}\\
  \subfigure[]{\includegraphics[clip, trim = 0 220 0 230, width=.48\textwidth]{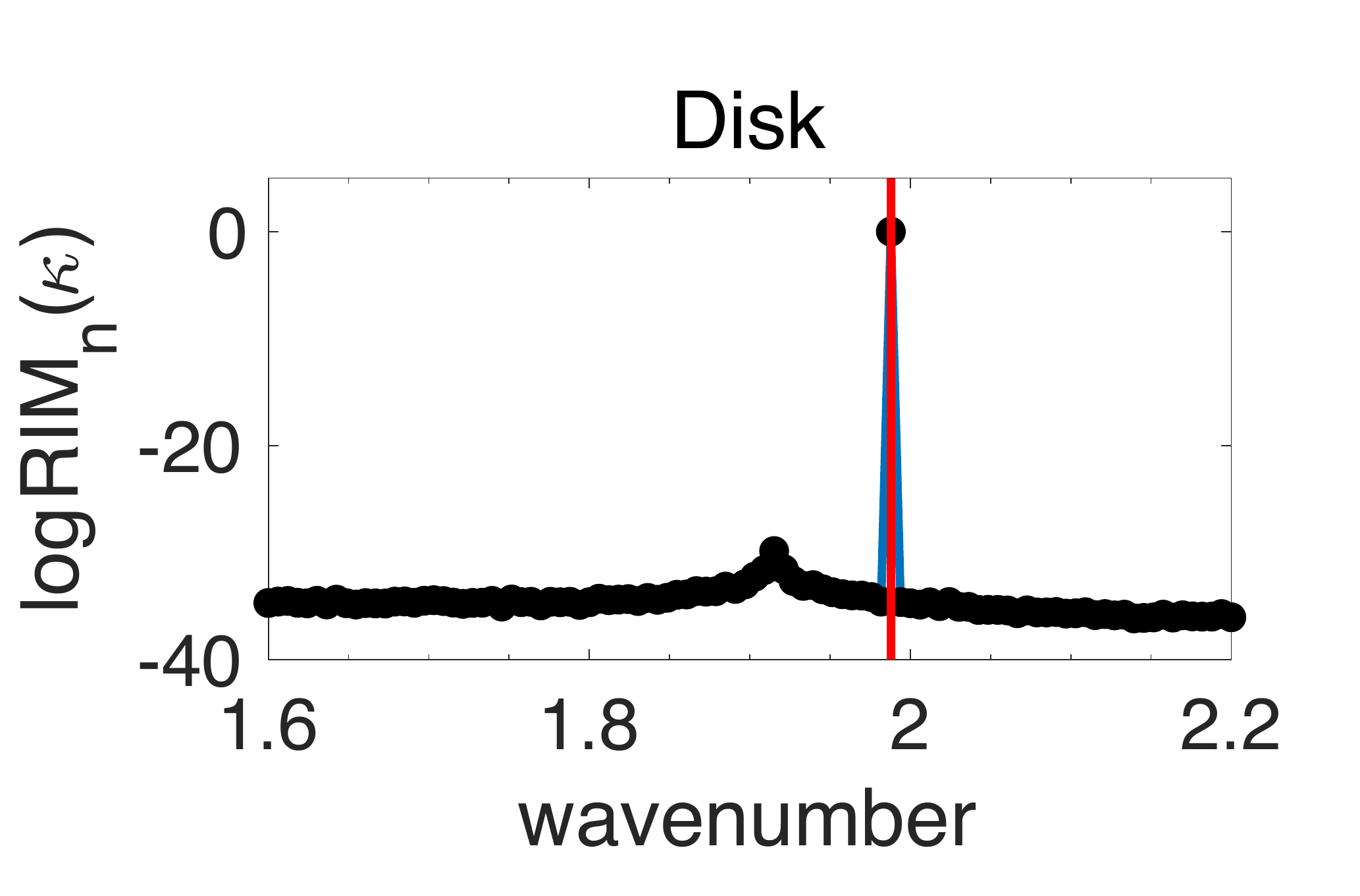}}
  \caption{Plots of $\log{{\rm  RIM}_n(\kappa)}$ with different radii $r$ in Example 1: (a) $r=0.005$ (b) $r=0.003$ (c) $r=0.001$.} 
  \label{fig:disk_r} 
\end{figure}

\begin{figure}
  \centering
  \subfigure[]{\includegraphics[clip, trim = 0 220 0 230, width=.48\textwidth]{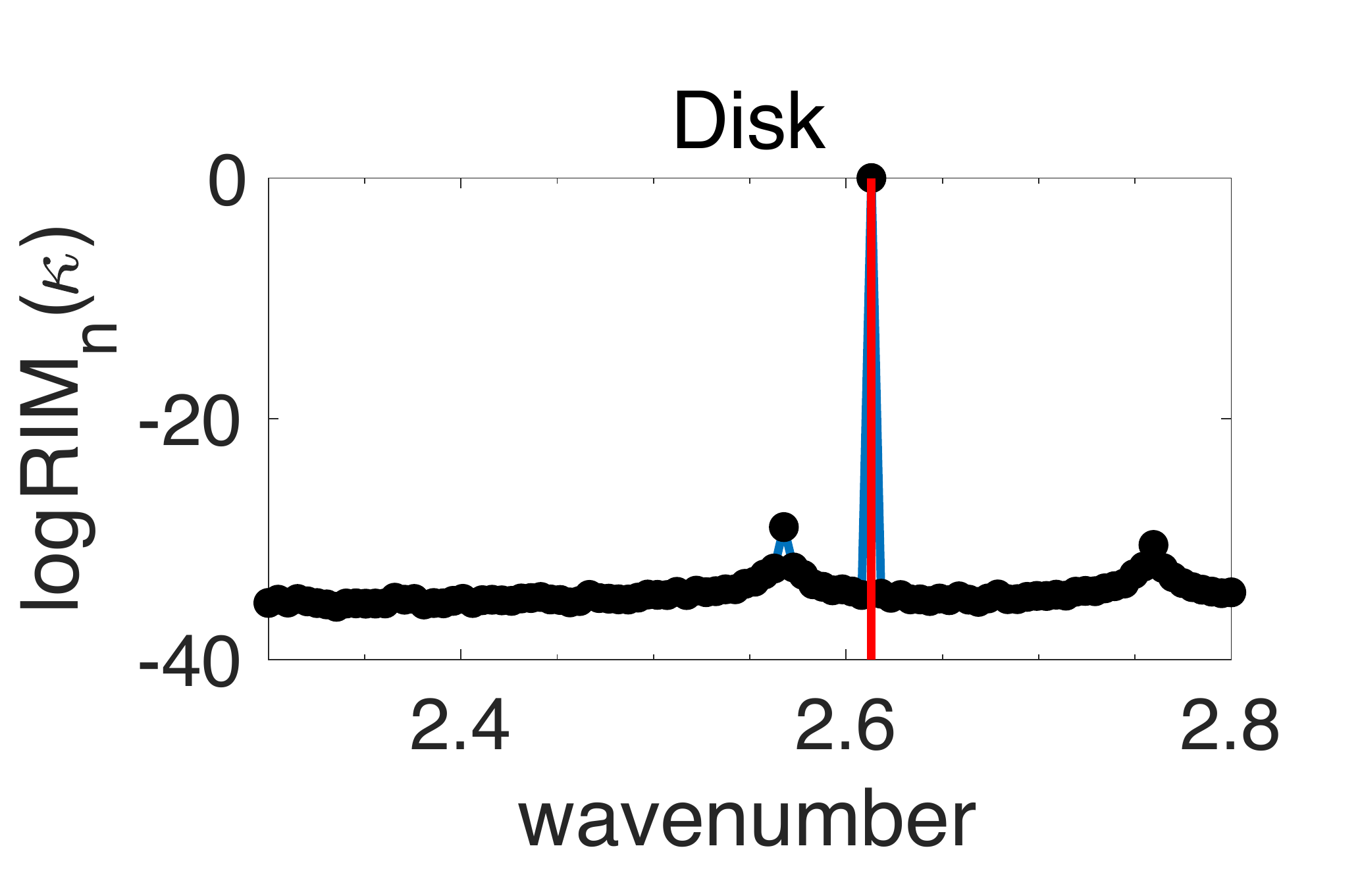}}
  \subfigure[]{\includegraphics[clip, trim = 0 220 0 230, width=.48\textwidth]{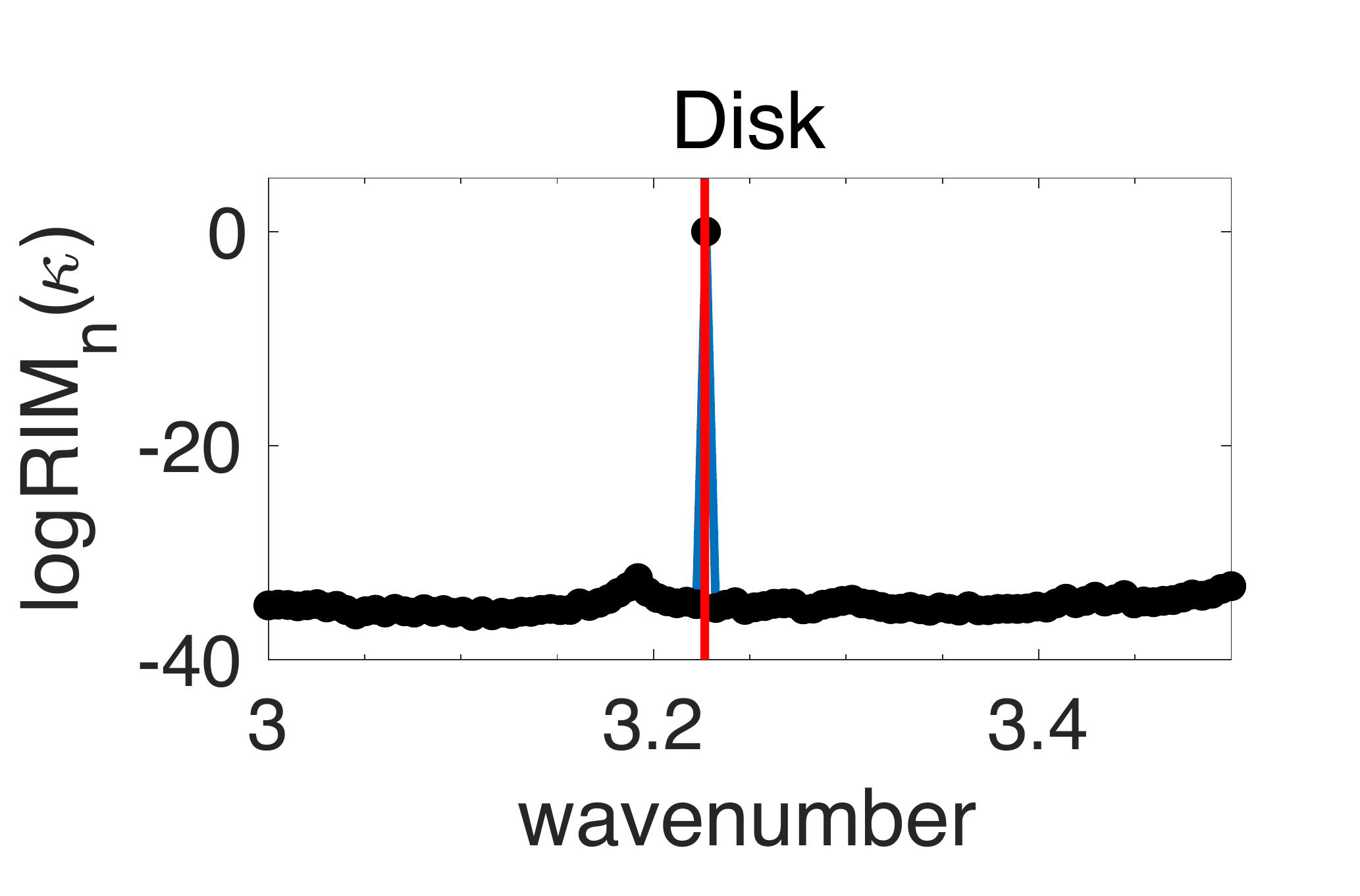}}\\
  \subfigure[]{\includegraphics[clip, trim = 0 220 0 230, width=.48\textwidth]{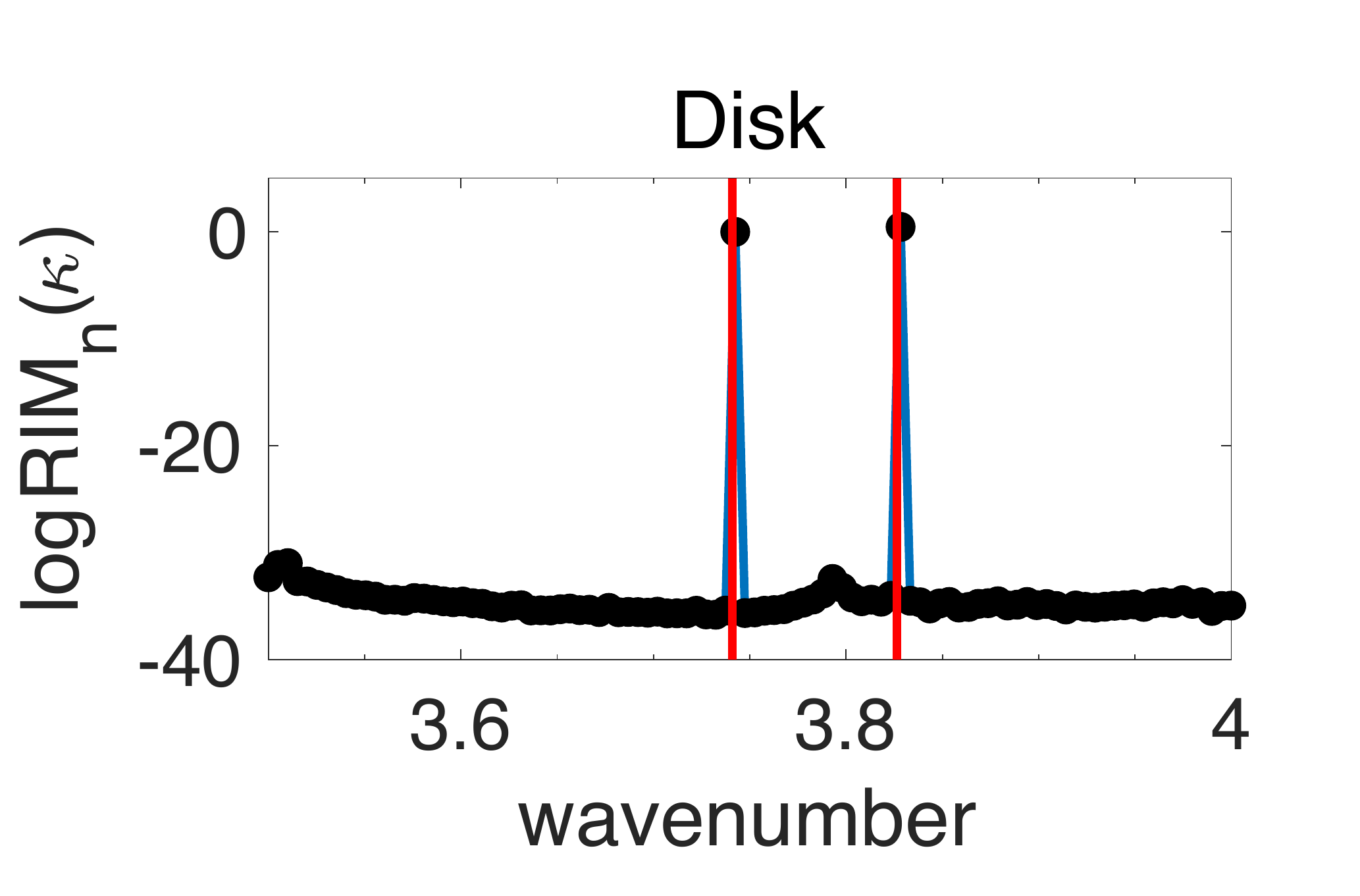}}
  \subfigure[]{\includegraphics[clip, trim = 0 220 0 230, width=.48\textwidth]{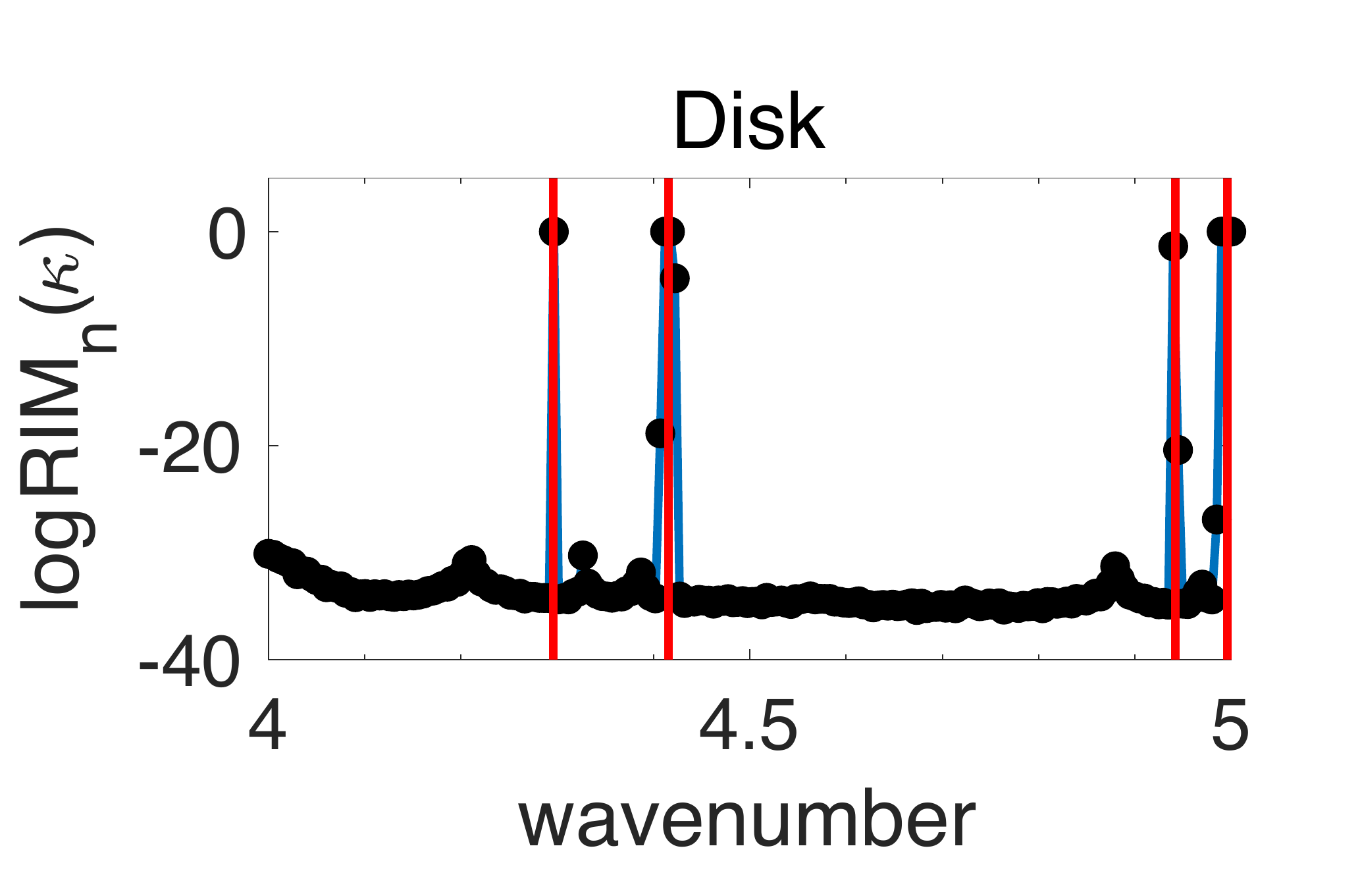}}
  \caption{Plots of $\log{{\rm  RIM}_n(\kappa)}$ in different intervals for Example 1: (a) $[2.3, 2.8]$ (b) $[3, 3.5]$ (c) $[3.5, 4]$ (d) $[4, 5]$.} 
  \label{fig:disk_interval} 
\end{figure}
  
\begin{figure}
  \centering
  \subfigure[]{\includegraphics[clip, trim = 0 200 0 200, width=.48\textwidth]{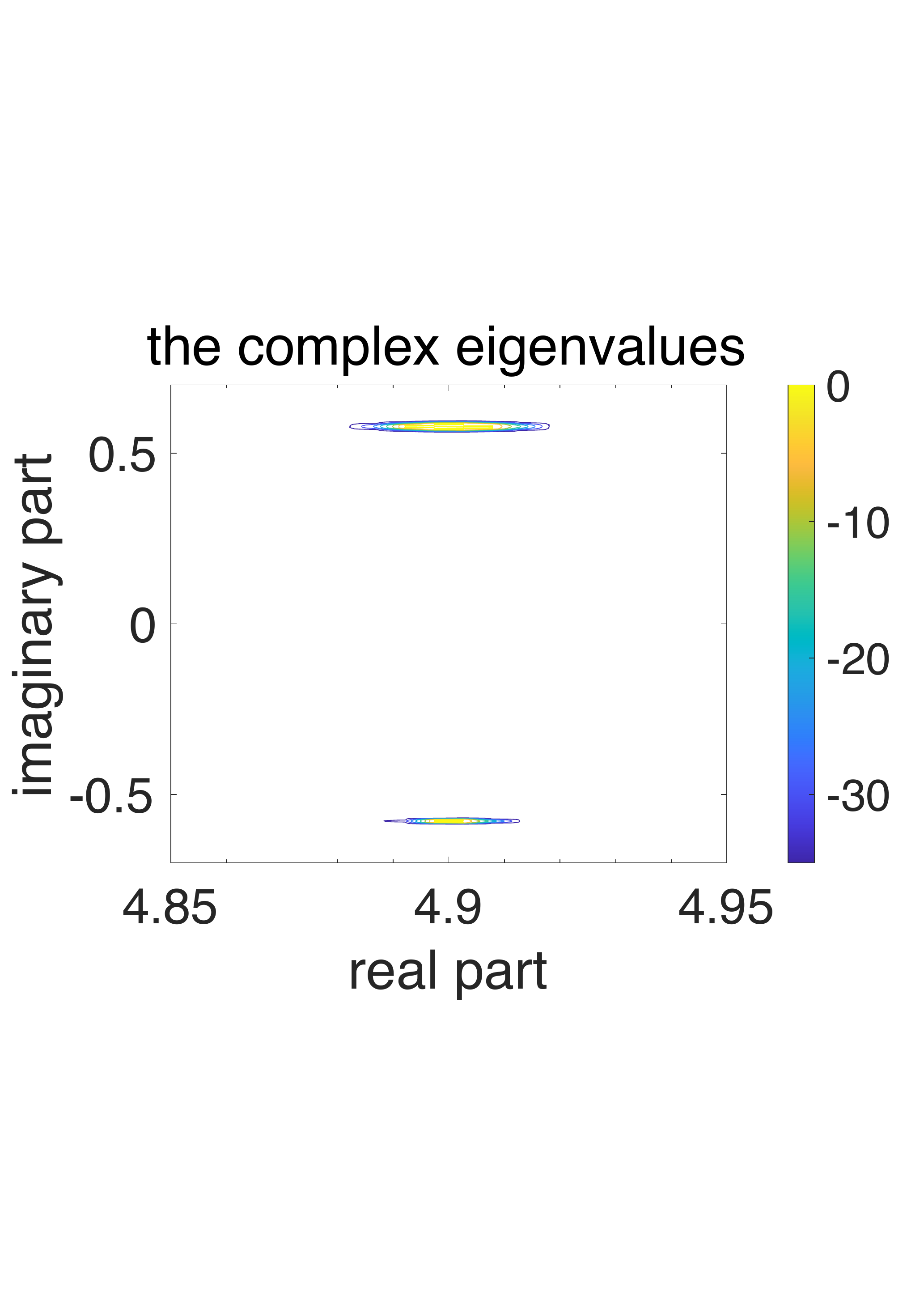}}
  \subfigure[]{\includegraphics[clip, trim = 0 200 0 200, width=.48\textwidth]{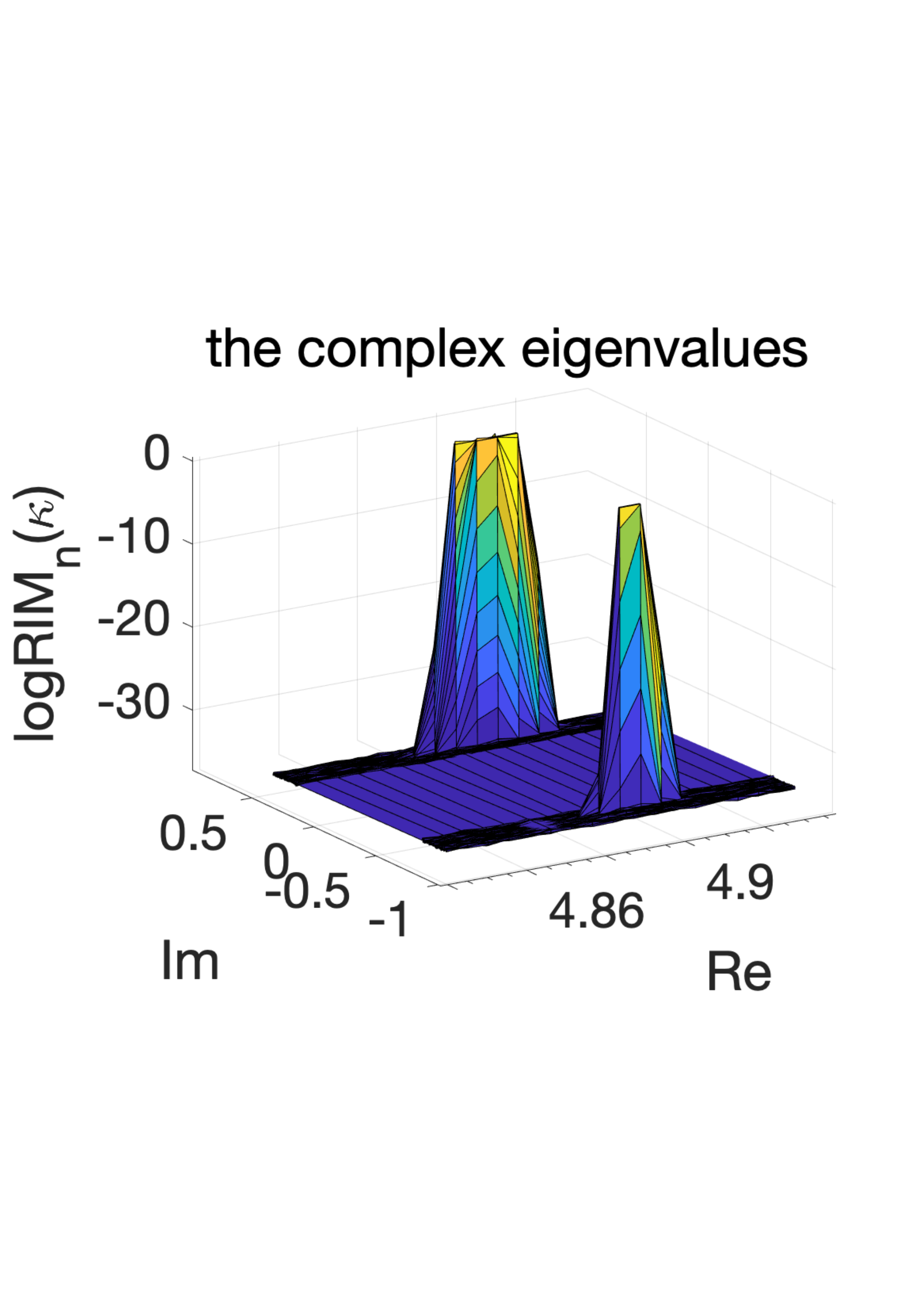}}
 \caption{Plots of $\log{{\rm  RIM}_n(\kappa)}$ for Example 1 in the complex plane: (a) contour plot (b) surface plot.} 
  \label{fig:disk_complex} 
\end{figure}

\begin{example} 
We consider in this example a peanut-shaped domain enclosed by the equation
$$
\sqrt{0.25+\cos^2 t}(\cos t, \sin t), \quad t\in[0, 2\pi].
$$
We choose the interval to be $[1.3, 1.6]$ and  $[1.65, 2]$ with dividing it into $200$ subintervals. The results are shown in Figure \ref{fig:peanut}. We compare the results with the eigenvalues computed by finite element methods from \cite{Tiexiang2015}. Each location of these eigenvalues is marked by a red line. We conclude that the algorithm proposed in this paper is effective.
\end{example}

\begin{figure}
  \centering
  \subfigure[]{\includegraphics[clip, trim = 0 200 0 200, width=.48\textwidth]{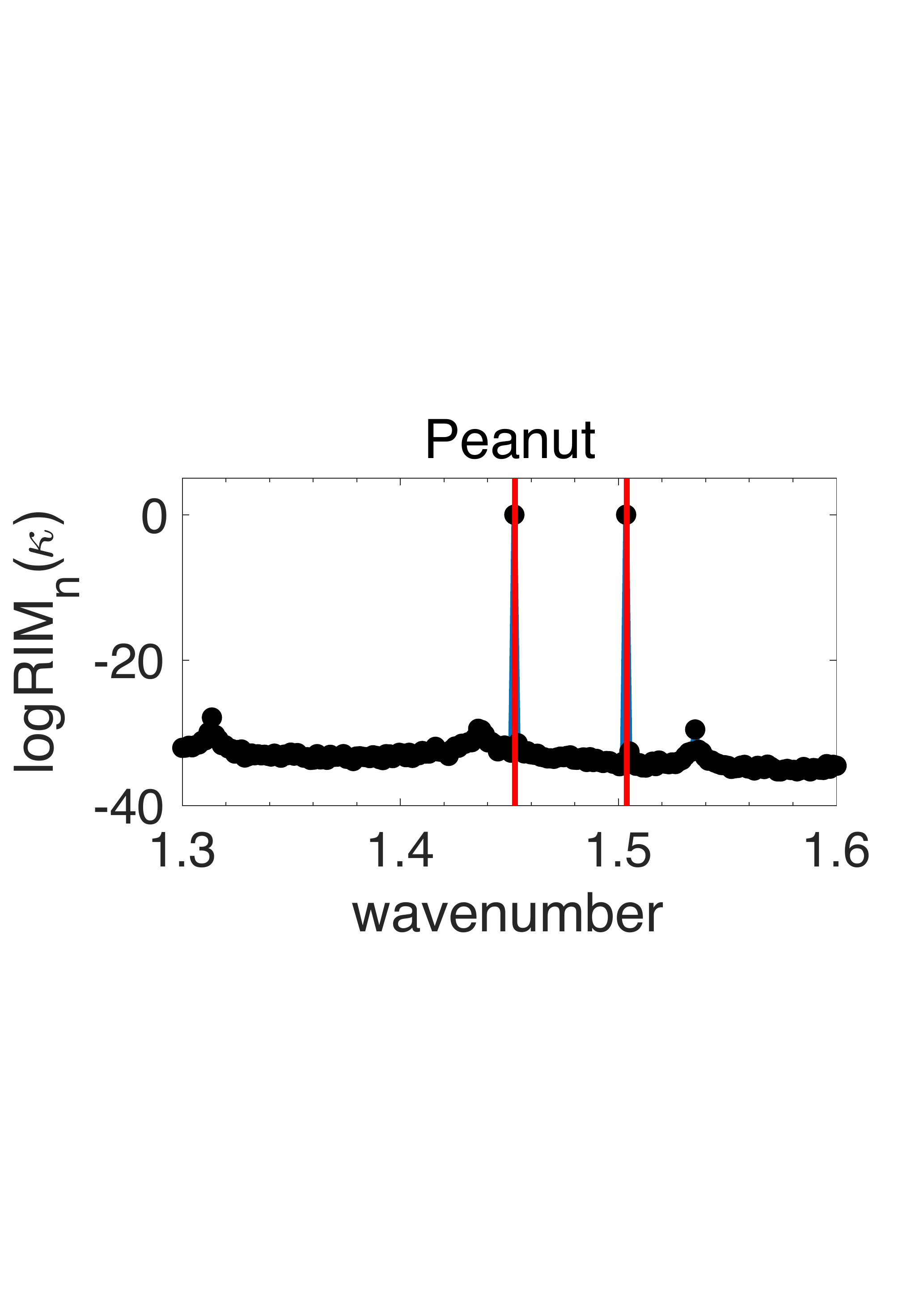}}
  \subfigure[]{\includegraphics[clip, trim = 0 200 0 200, width=.48\textwidth]{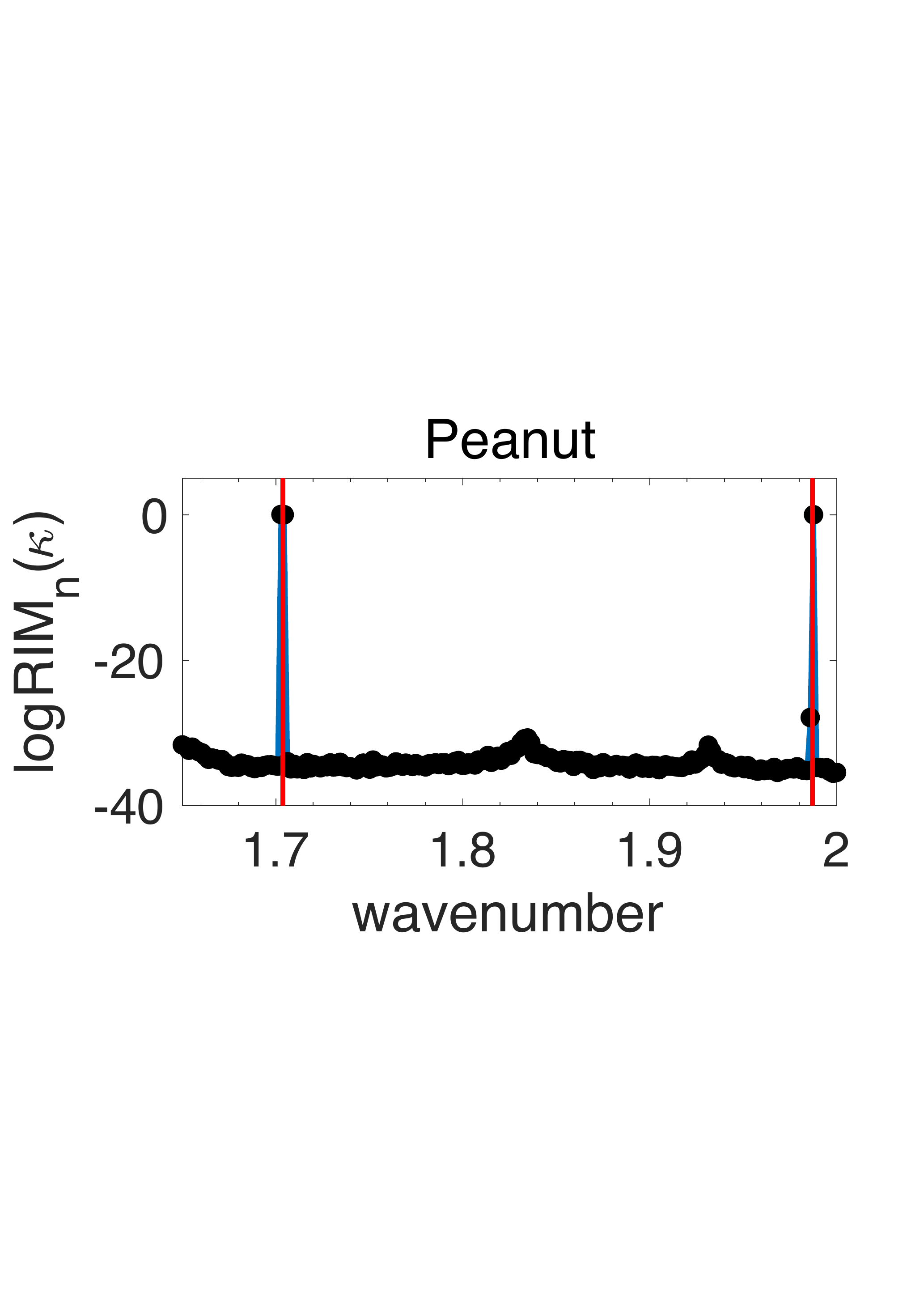}}
  \caption{Plots of $\log{{\rm  RIM}_n(\kappa)}$ for Example 2 in different intervals: (a)  $[1.3, 1.6]$ (b) $[1.65,2]$.} 
  \label{fig:peanut} 
\end{figure}

In the following four examples, we test the method with regularization for some domains with corners, 
where the domains are plotted in Figure \ref{fig:shapes_with_corner}. We note that  the location of
the eigenvalues computed by finite element methods from \cite{Tiexiang2015} is marked by a red line
in Example 3 and 4.

\begin{figure}
  \centering
  \subfigure[]{\includegraphics[clip, trim = 0 200 0 200, width=.23\textwidth]{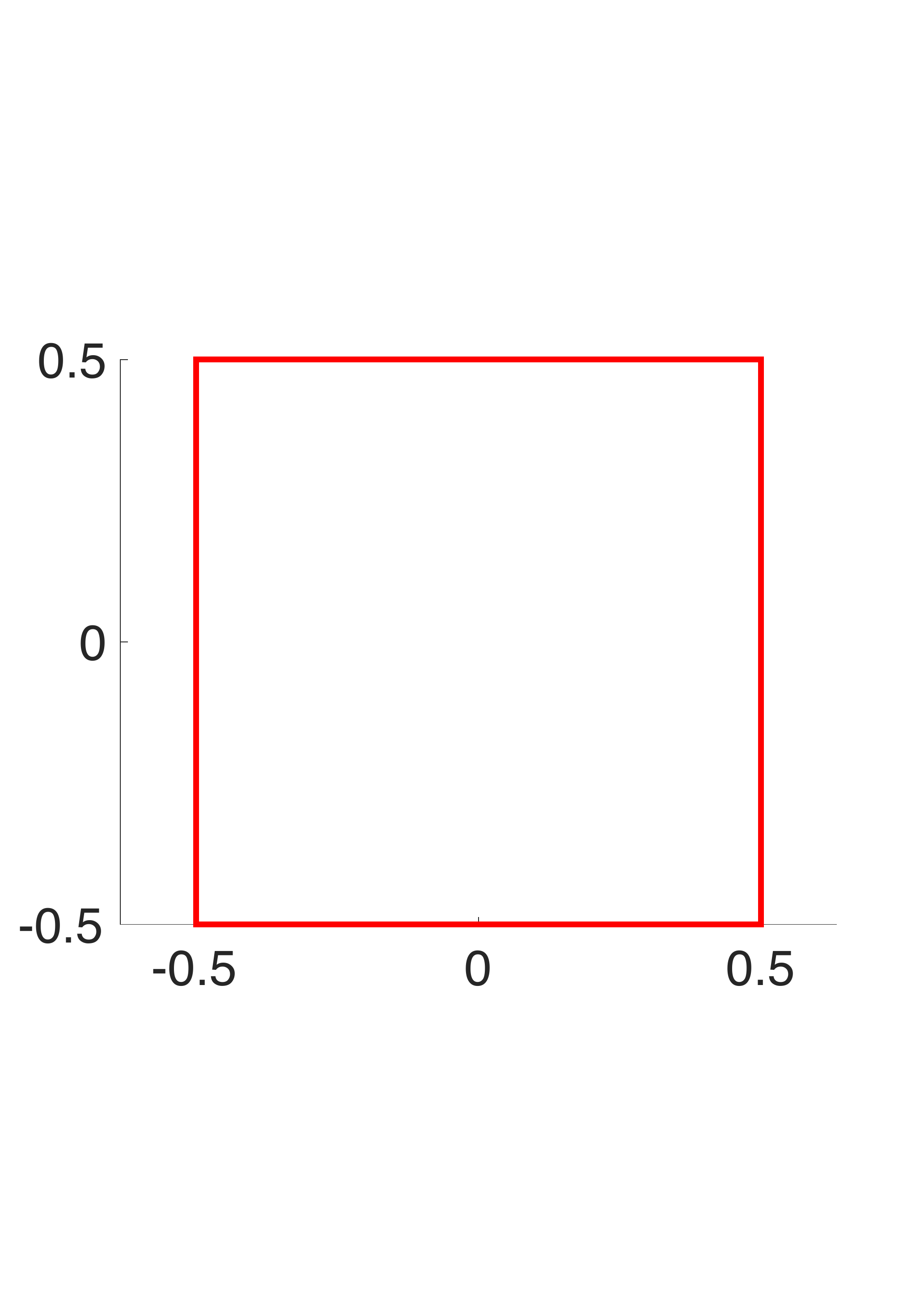}}
  \subfigure[]{\includegraphics[clip, trim = 0 200 0 200, width=.23\textwidth]{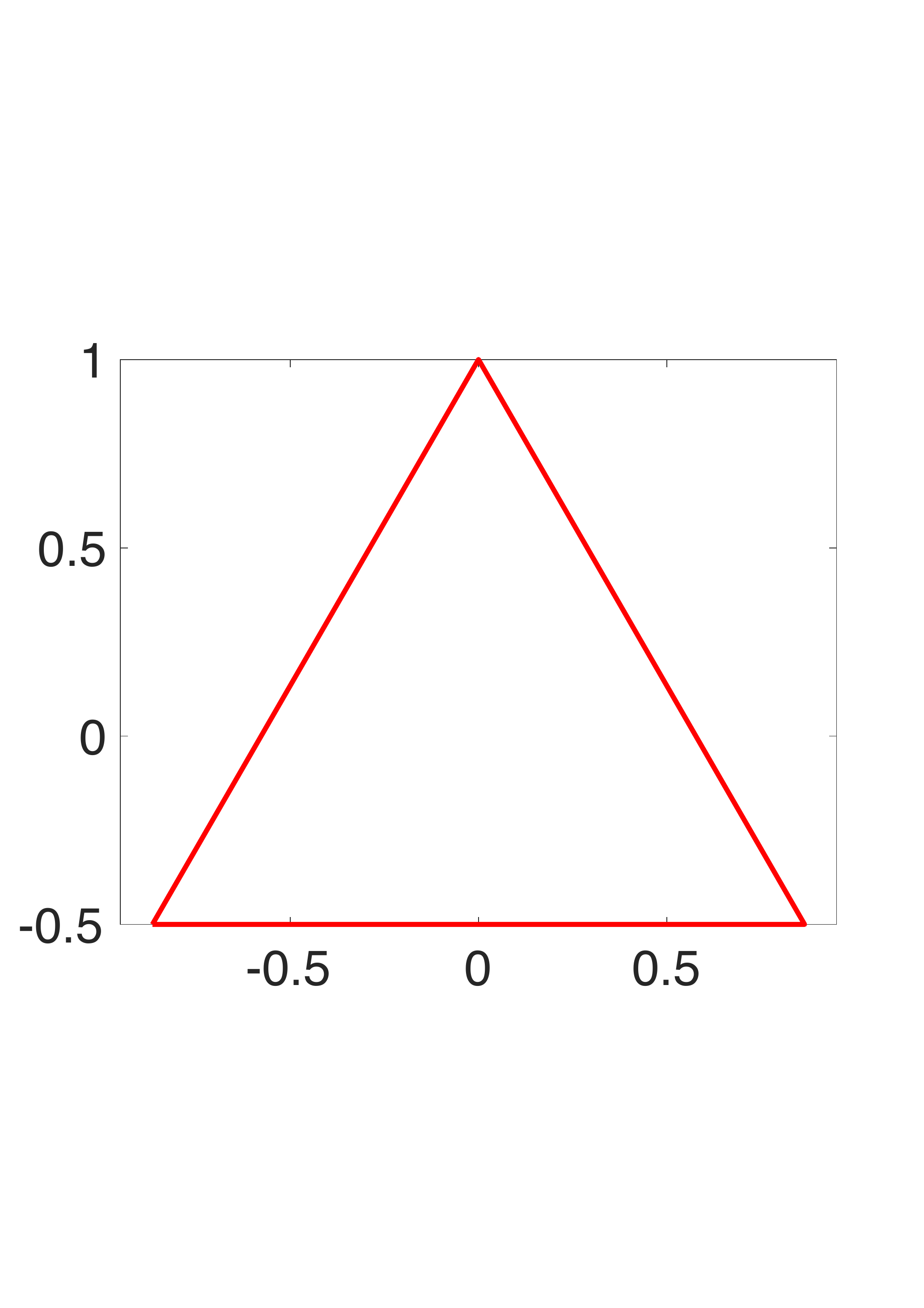}}
  \subfigure[]{\includegraphics[clip, trim = 0 200 0 200, width=.23\textwidth]{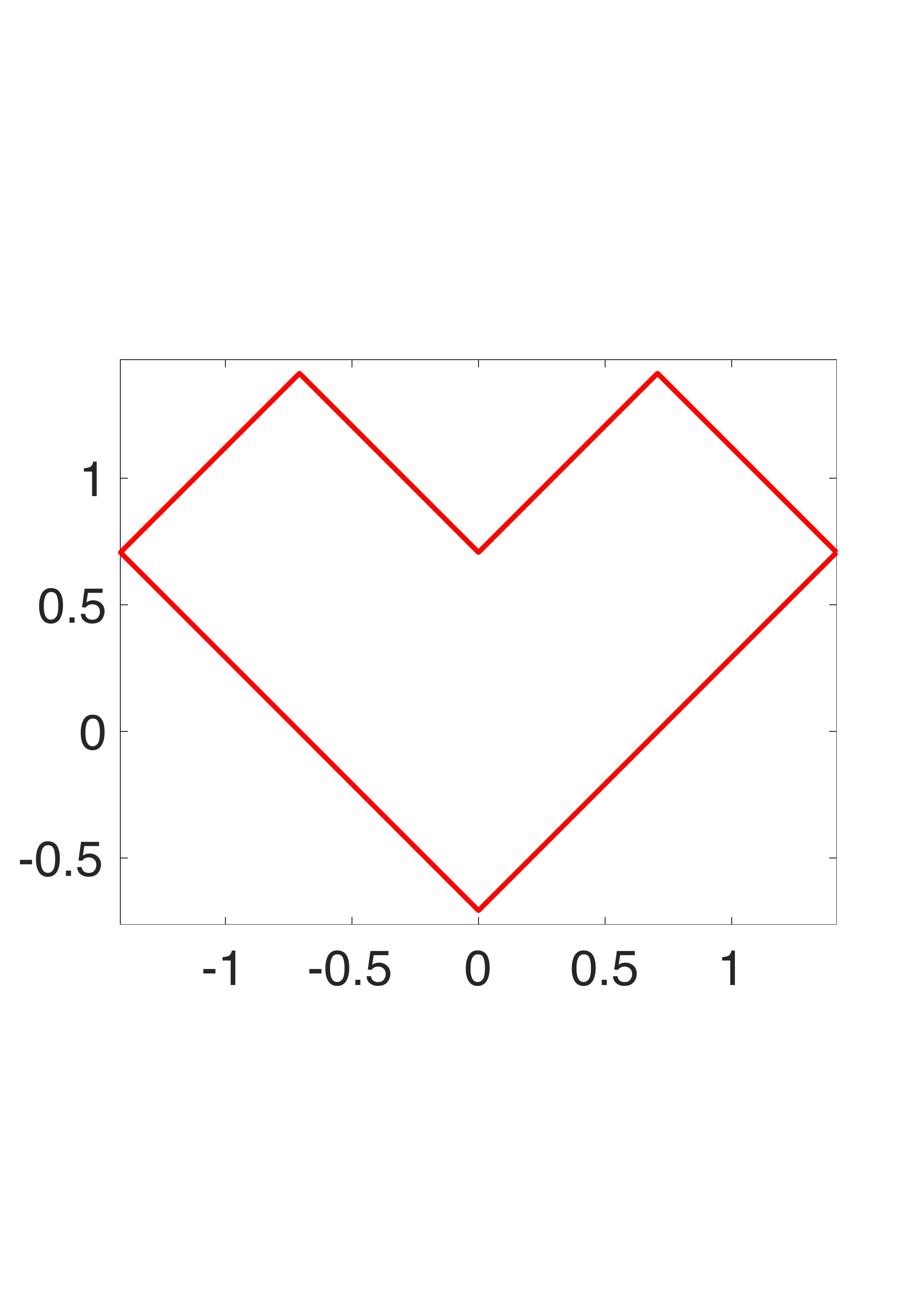}}
  \subfigure[]{\includegraphics[clip, trim = 0 200 0 200, width=.23\textwidth]{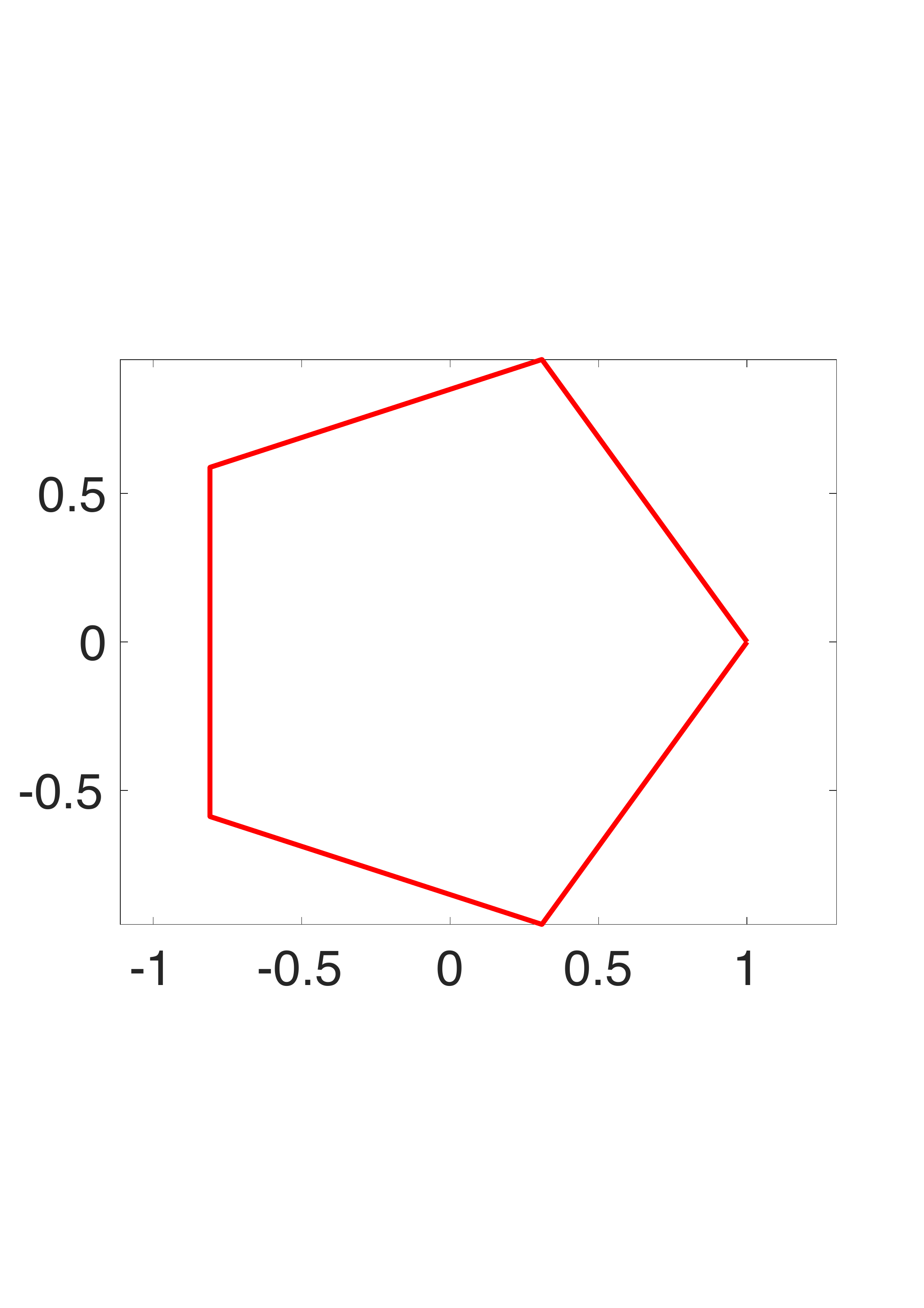}}
  \caption{Several domains with corners: (a) square (b) triangle (c) L-shape (d) pentagon.} 
  \label{fig:shapes_with_corner} 
\end{figure}

\begin{example} 
We consider in this example a unit square centered at the origin. We choose the intervals to be $[1.65, 1.95]$, $[2.25,2.55]$ and $[2.7, 3]$. Each interval is divided  into $100$ subintervals. The results are shown in Figures \ref{fig:square}-\ref{fig:square_interval}.  We plot the results in Figure \ref{fig:square} for different regularization parameters  with $\eta=10^{-m}, m=2,3,\dots,7$. We see that the result for $\eta=10^{-2}$ and $\eta=10^{-7}$ is worst. This implies that the regularization is indeed necessary and effective. According to Figure \ref{fig:square}, we shall use $\eta=10^{-5}$ as the regularization parameter in the following examples. The results for the intervals  $[2.25, 2.55]$ and $[2.7, 3]$ are shown in Figure \ref{fig:square_interval}.
\end{example}

\begin{figure}
	\centering
	\subfigure[]{\includegraphics[clip, trim = 0 220 0 230, width=.48\textwidth]{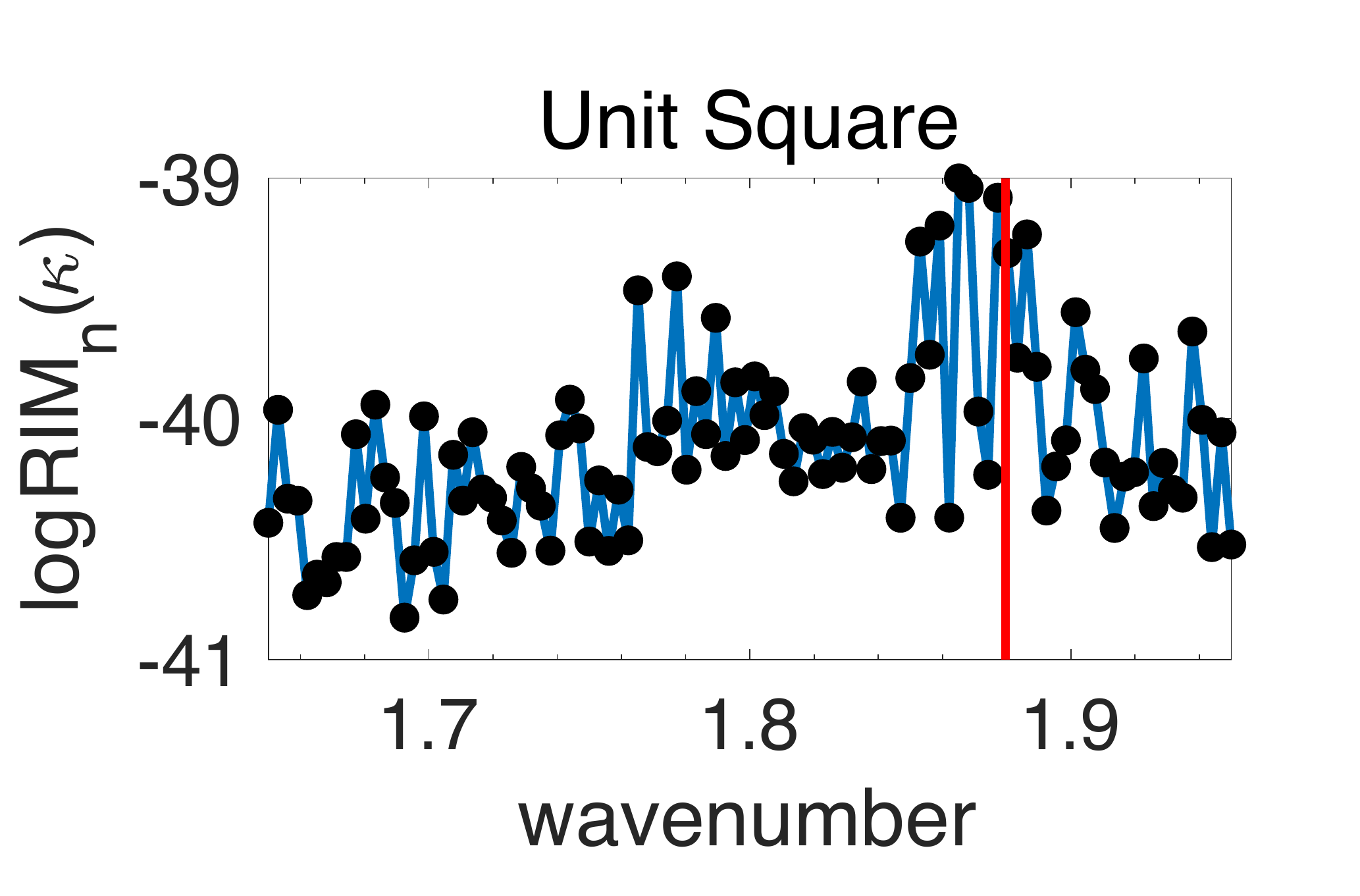}}
	\subfigure[]{\includegraphics[clip, trim = 0 220 0 230, width=.48\textwidth]{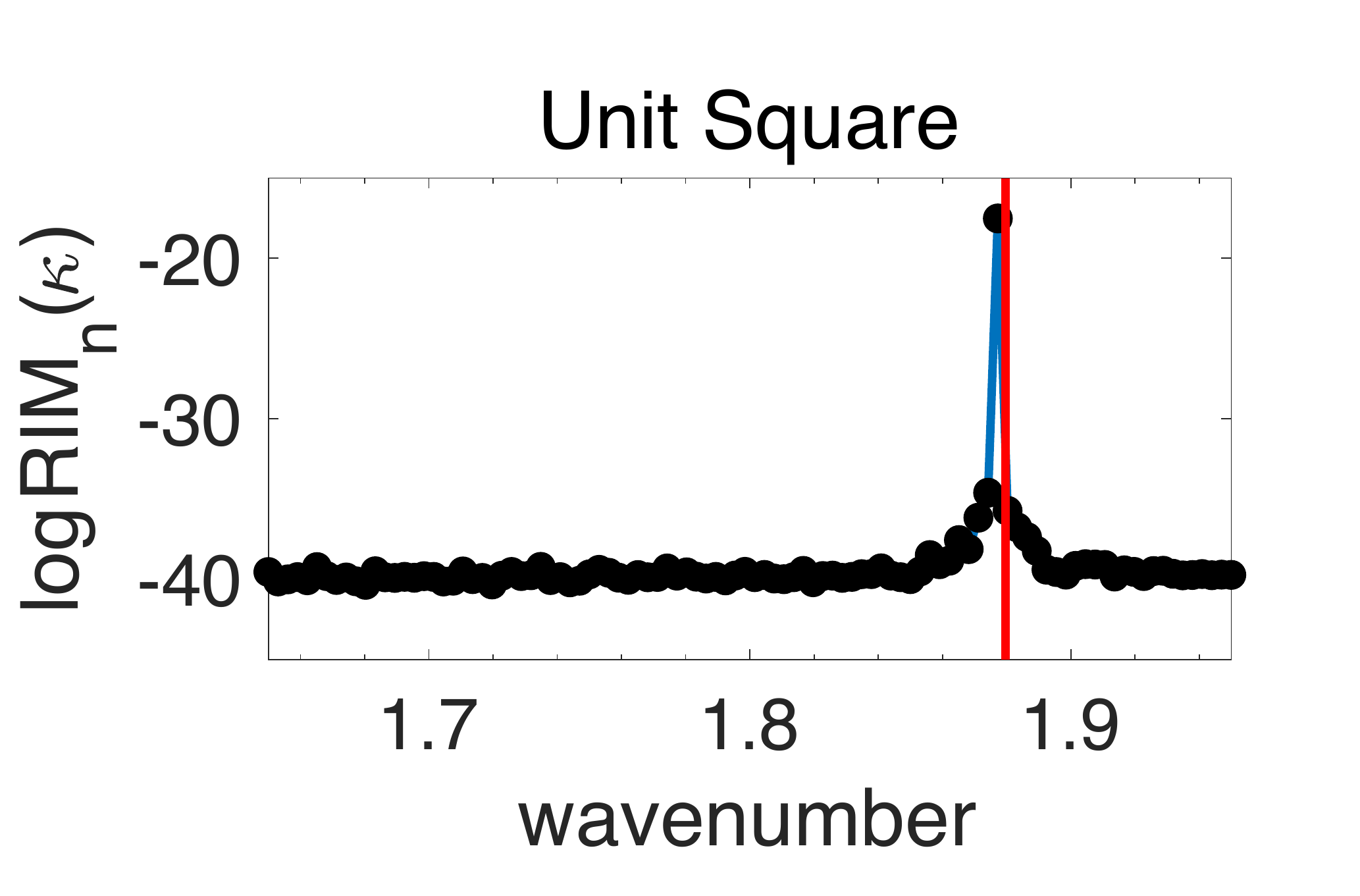}}\\
	\subfigure[]{\includegraphics[clip, trim = 0 220 0 230, width=.48\textwidth]{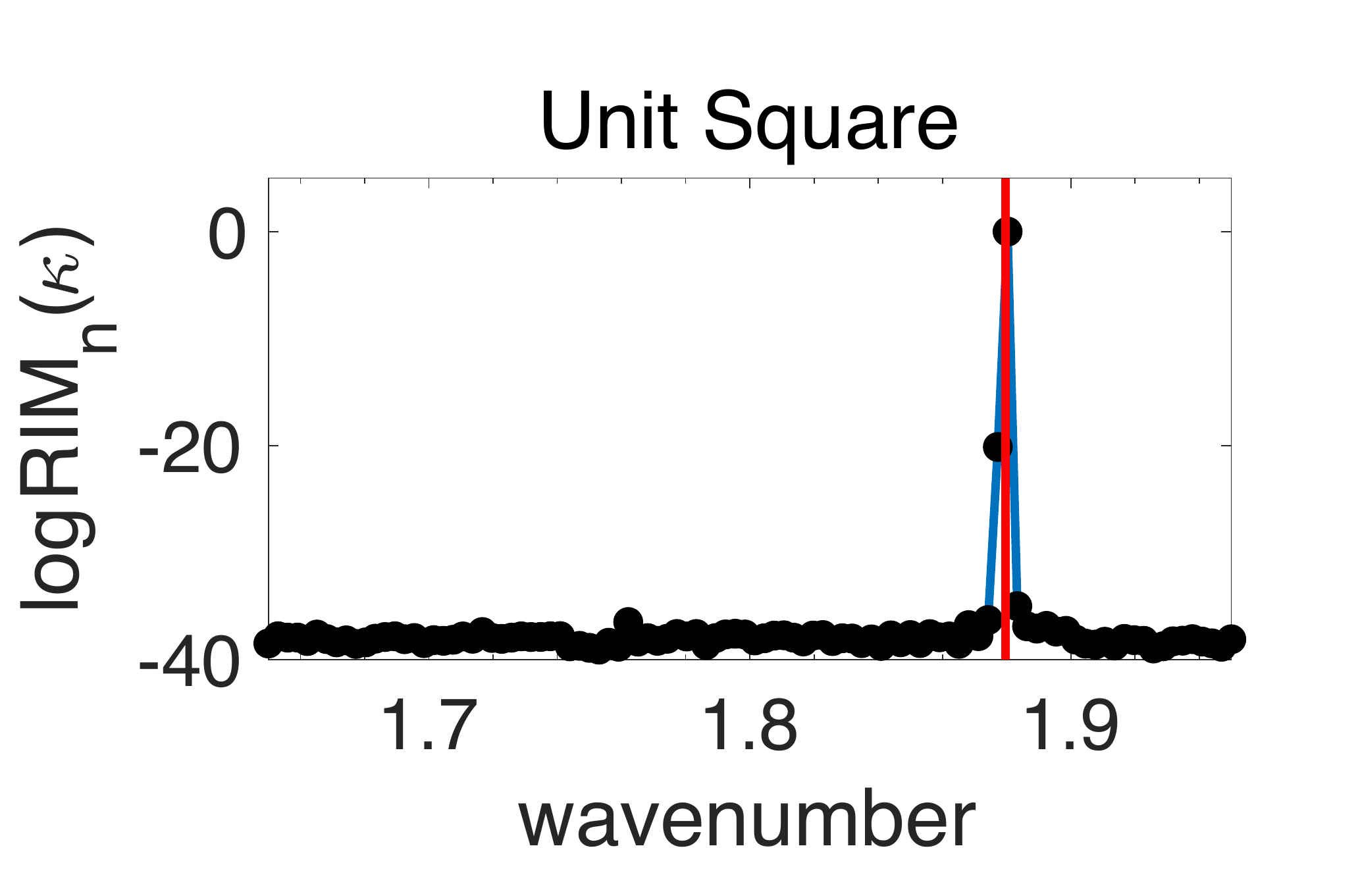}}
	\subfigure[]{\includegraphics[clip, trim = 0 220 0 230, width=.48\textwidth]{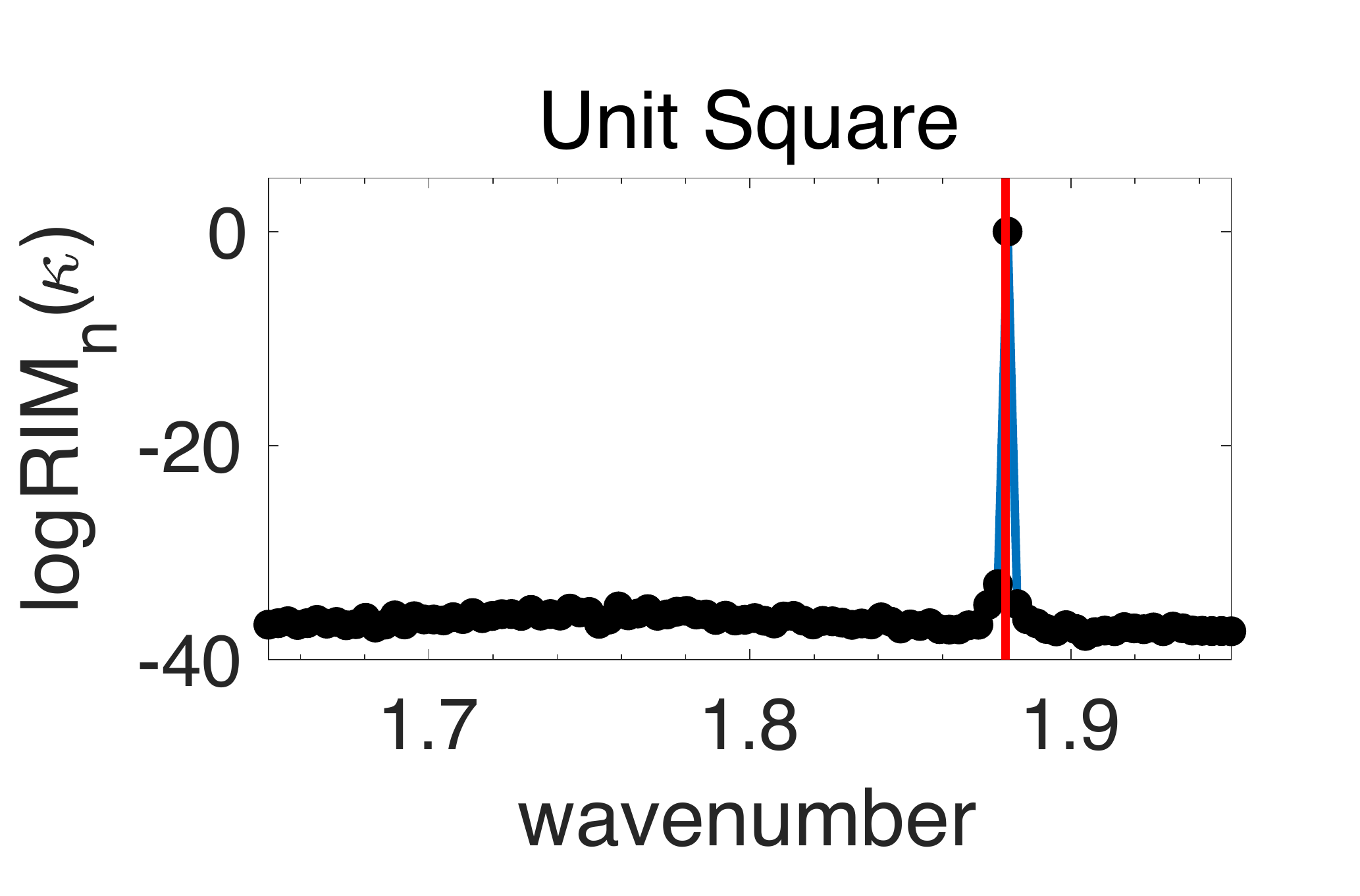}} \\
	\subfigure[]{\includegraphics[clip, trim = 0 220 0 230, width=.48\textwidth]{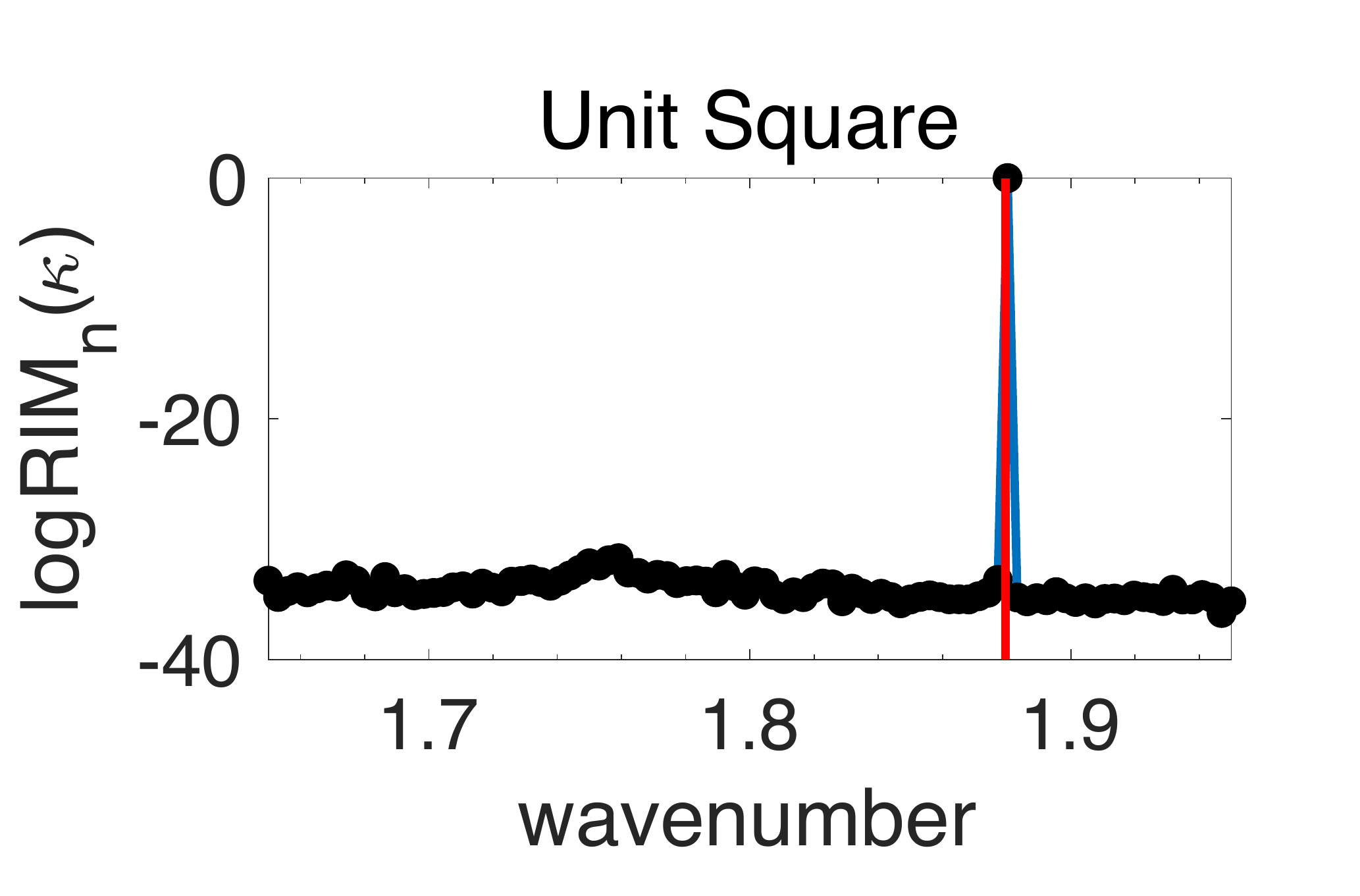}} 
	\subfigure[]{\includegraphics[clip, trim = 0 220 0 230, width=.48\textwidth]{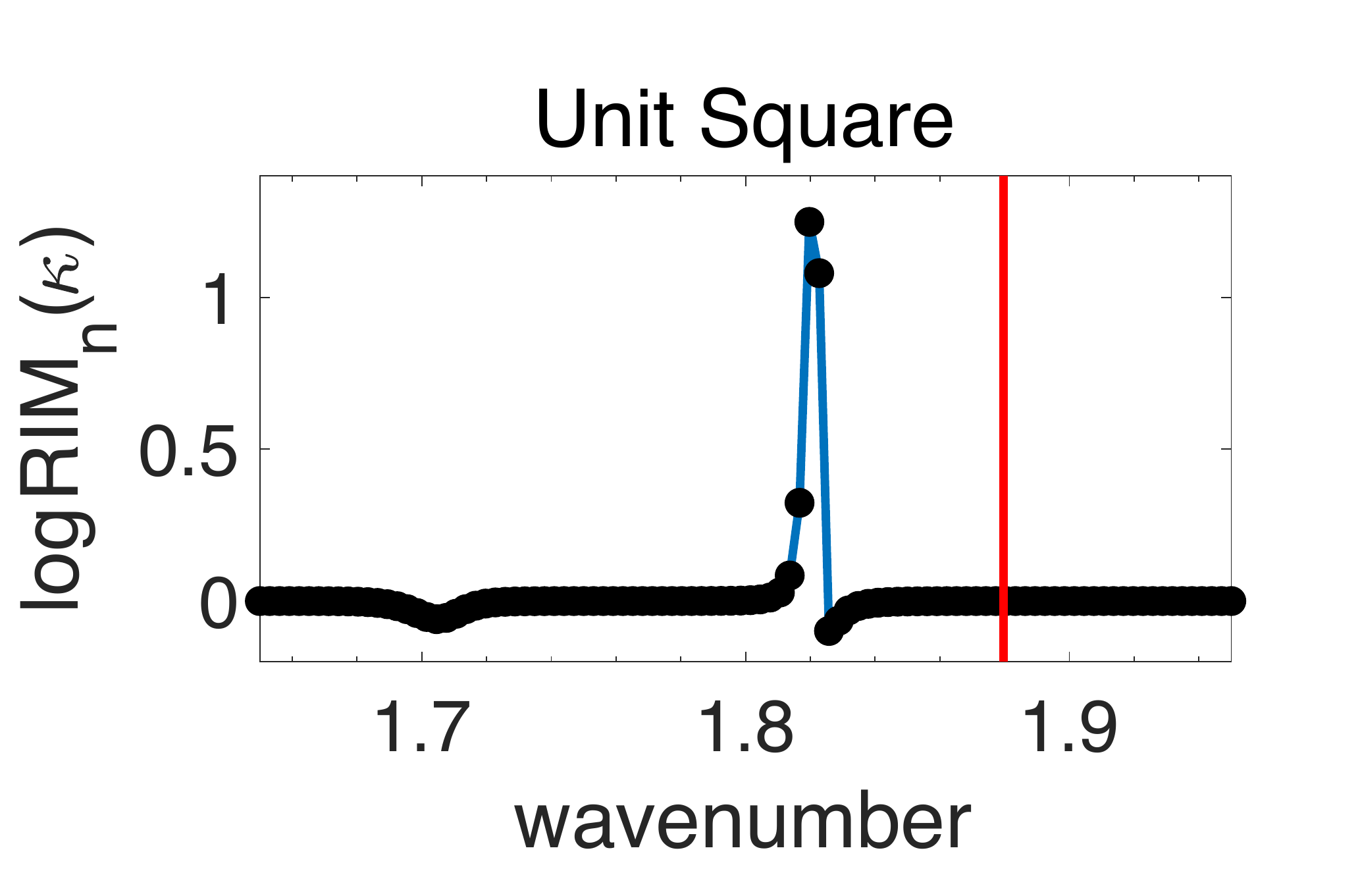}} 
	\caption{Plots of $\log{{\rm  RIM}_n(\kappa)}$ for different $\eta$ in Example 3: (a)  $\eta=10^{-2}$ (b) $\eta=10^{-3}$ (c) $\eta=10^{-4}$ (d) $\eta=10^{-5}$ (e) $\eta=10^{-6}$ (g) $\eta=10^{-7}$.} 
	\label{fig:square} 
\end{figure}

\begin{figure}
  \centering
  \subfigure[]{\includegraphics[clip, trim = 0 220 0 230, width=.48\textwidth]{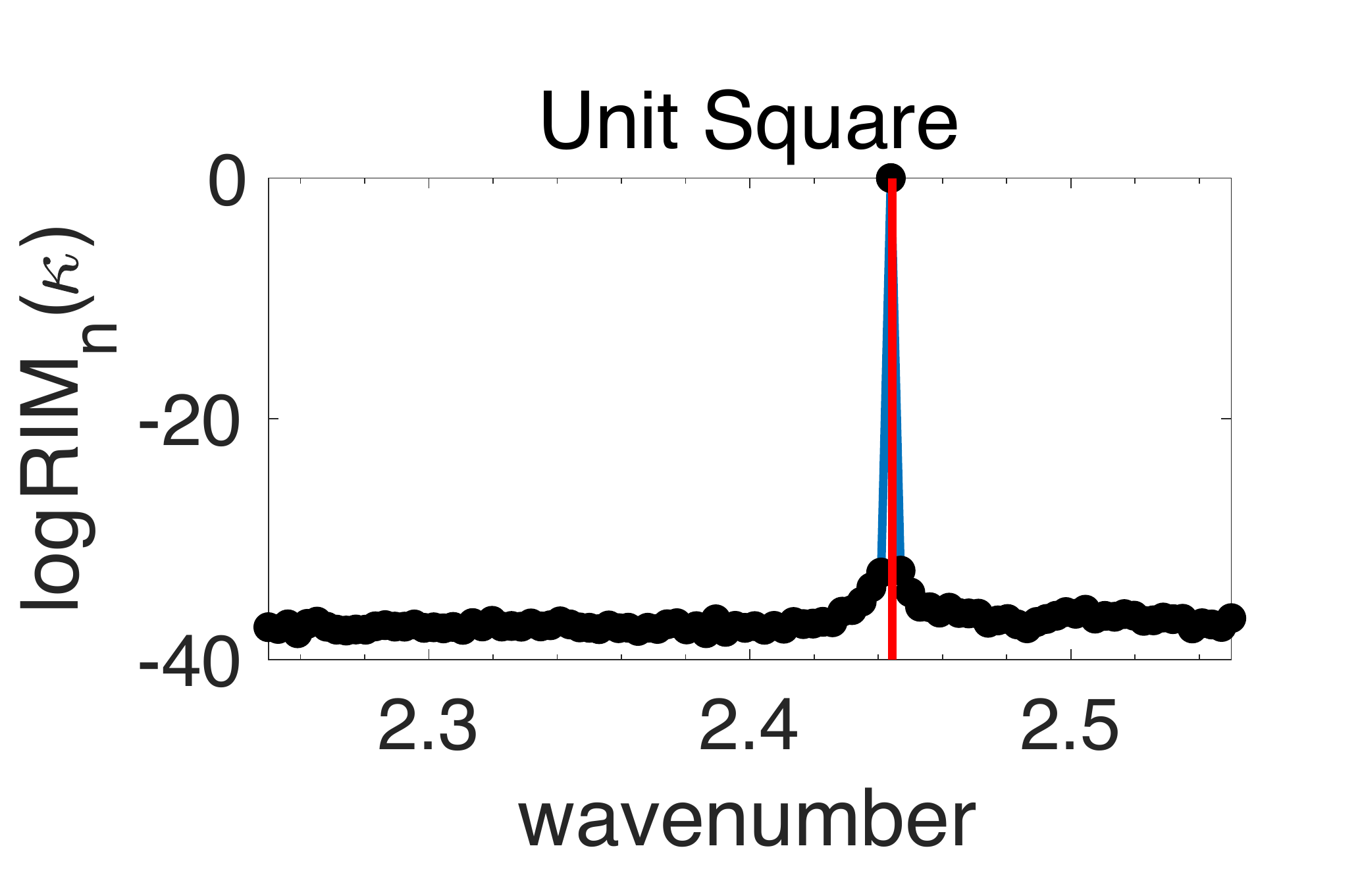}}
  \subfigure[]{\includegraphics[clip, trim = 0 220 0 230, width=.48\textwidth]{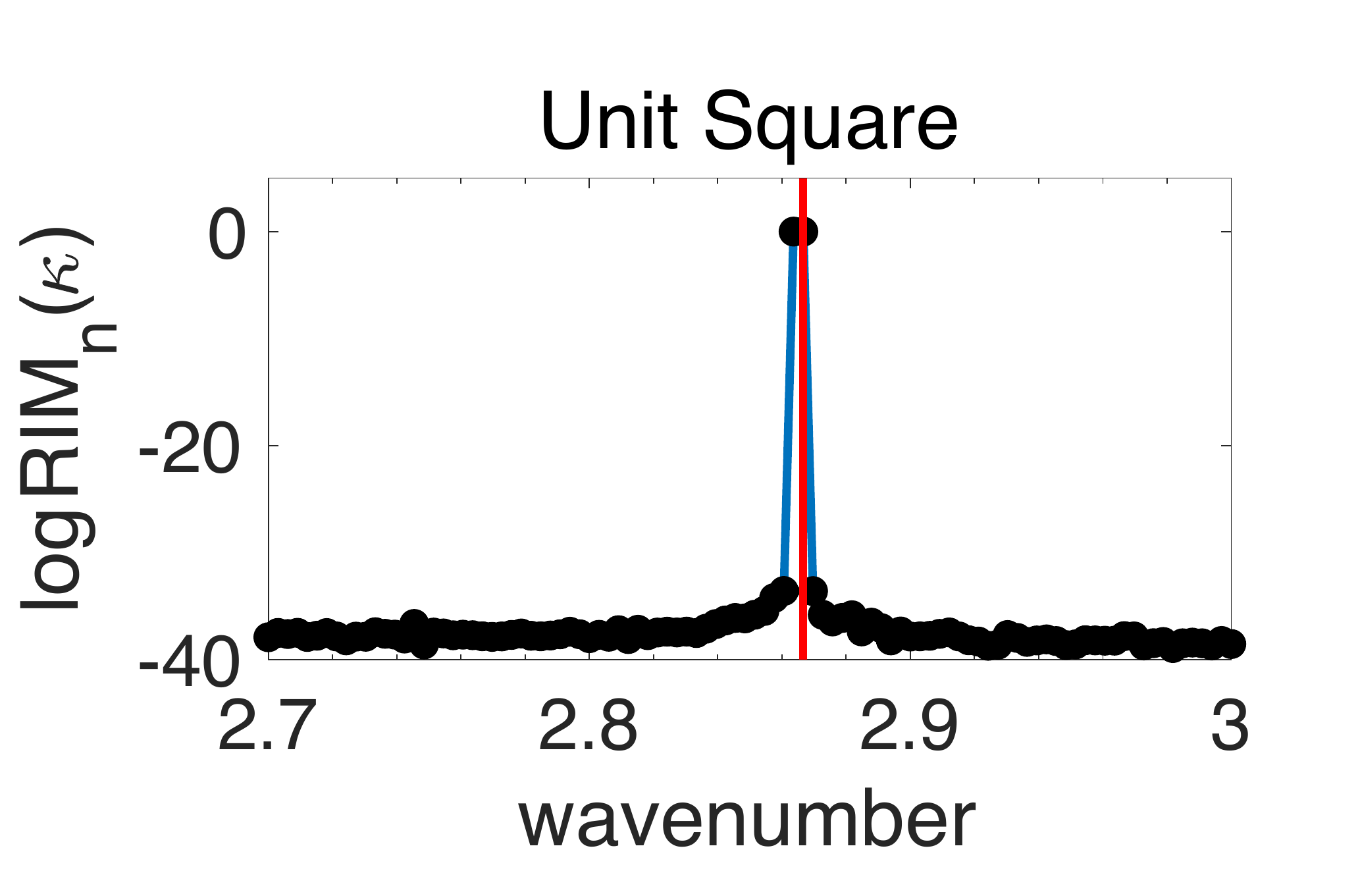}}
  \caption{Plots of $\log{{\rm  RIM}_n(\kappa)}$ for Example 3 in different intervals: (a) $[2.25, 2.55]$ (b) $[2.7, 3]$.} 
  \label{fig:square_interval} 
\end{figure}

\begin{example} 
We consider in this example a triangle with vertexes $(-\sqrt{3}/2,-1/2)$, $(\sqrt{3}/2,-1/2)$ and $(0,1)$.
We choose the intervals to be $[1.65, 1.95]$, $[2.10,2.40]$ and $[2.70,3.00]$.
Each interval is divided  into $100$ subintervals. The results are shown in Figure \ref{fig:triangle}.  
\end{example}

\begin{figure}
  \centering
  \subfigure[]{\includegraphics[clip, trim = 0 220 0 230, width=.48\textwidth]{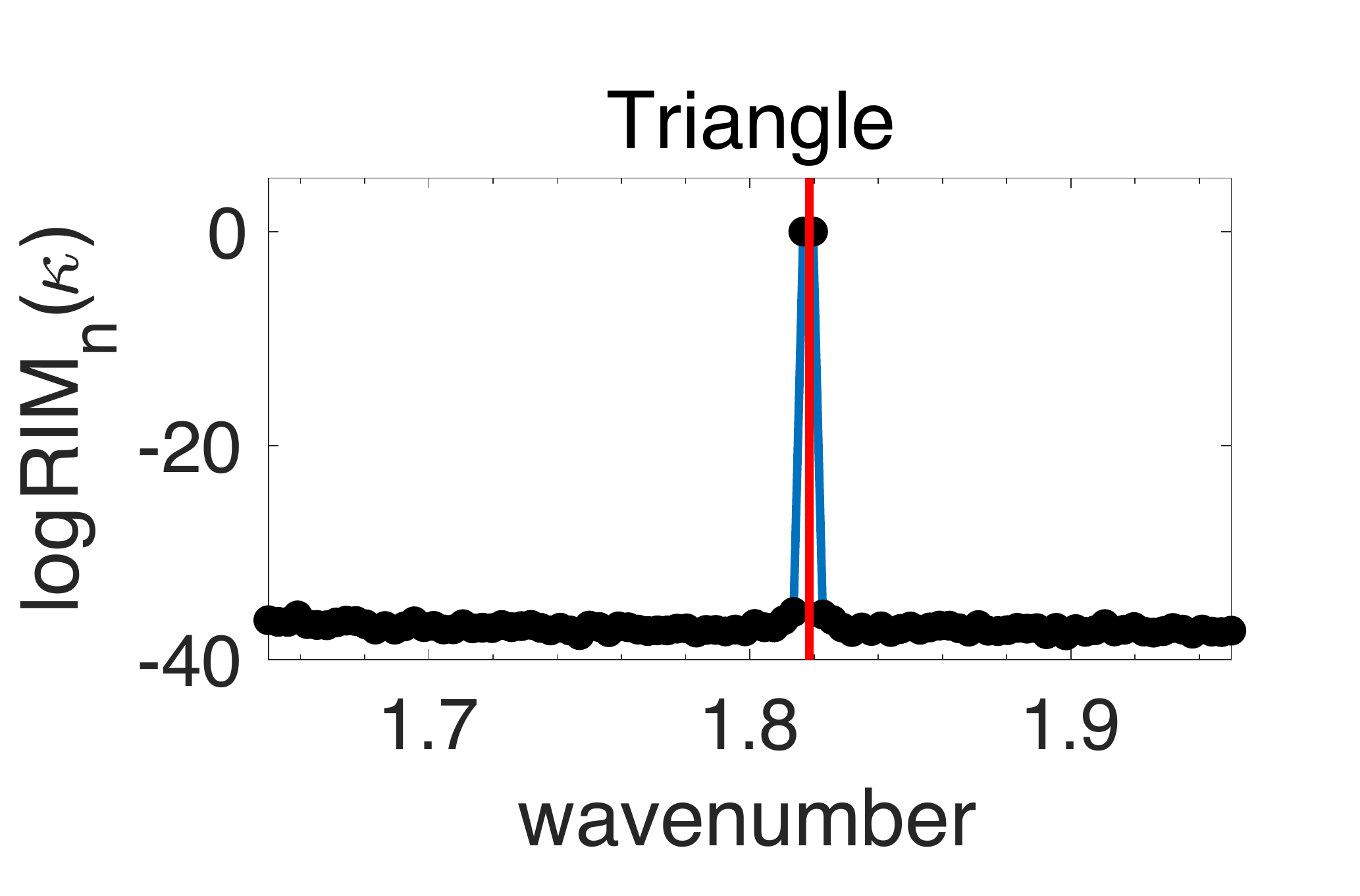}}
  \subfigure[]{\includegraphics[clip, trim = 0 220 0 230, width=.48\textwidth]{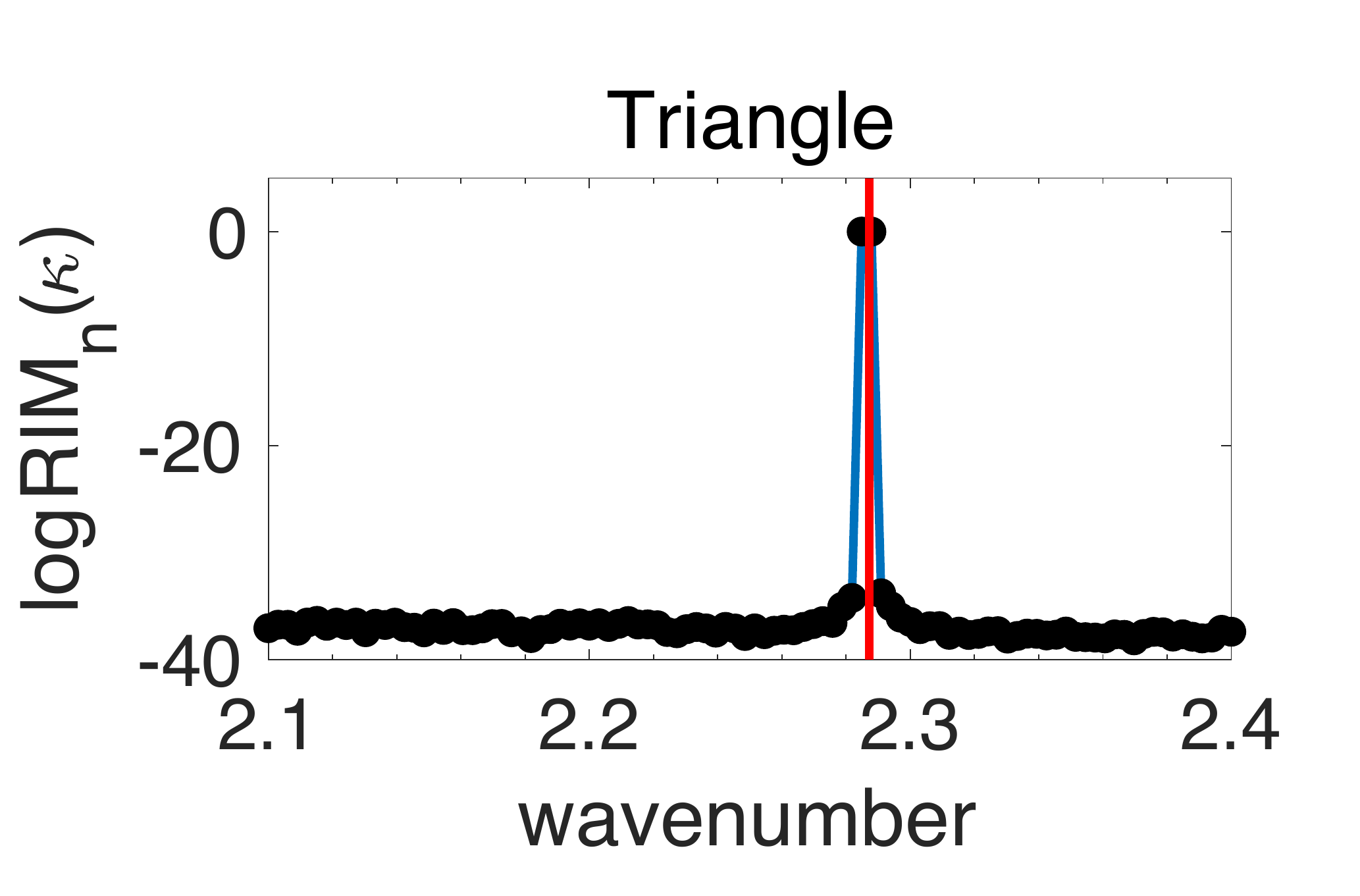}}\\
  \subfigure[]{\includegraphics[clip, trim = 0 220 0 230, width=.48\textwidth]{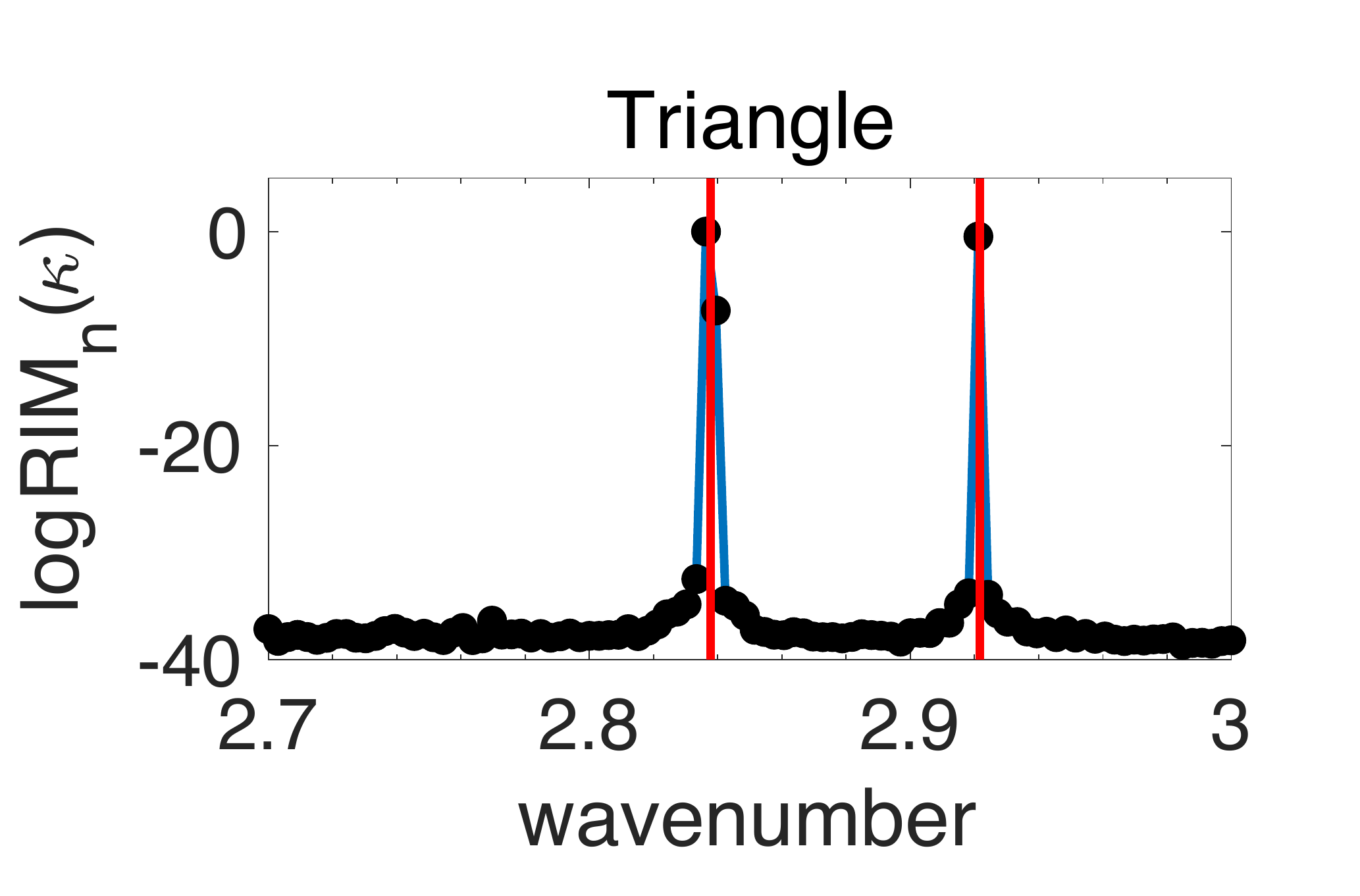}}
  \caption{Plots of $\log{{\rm  RIM}_n(\kappa)}$ for Example 4 in different intervals: (a) $[1.65, 1.95]$ (b) $[2.1, 2.4]$ (c)  $[2.7, 3]$.} 
  \label{fig:triangle} 
\end{figure}

\begin{example} 
We consider in this example the L-shape domain with vertexes $(\sqrt{2},\sqrt{2}/{2})$, $(\sqrt{2}/2,\sqrt{2})$, $(0,\sqrt{2}/2)$, $(-\sqrt{2}/2,\sqrt{2})$, $(-\sqrt{2},\sqrt{2}/{2})$ and  $(0,-\sqrt{2}/2)$.
We choose the intervals to be $[1.5, 2]$, $[2, 2.5]$ and $[2.5, 3]$.
Each interval is divided  into $200$ subintervals. The results are shown in Figure \ref{fig:L-shape}. 
The eigenvalues computed in this domains are $1.5541$, $1.8920$, $2.1131$, $2.4937$, $2.5226$, $2.7349$.
\end{example}
 
\begin{figure}
  \centering
  \subfigure[]{\includegraphics[clip, trim = 0 220 0 230, width=.48\textwidth]{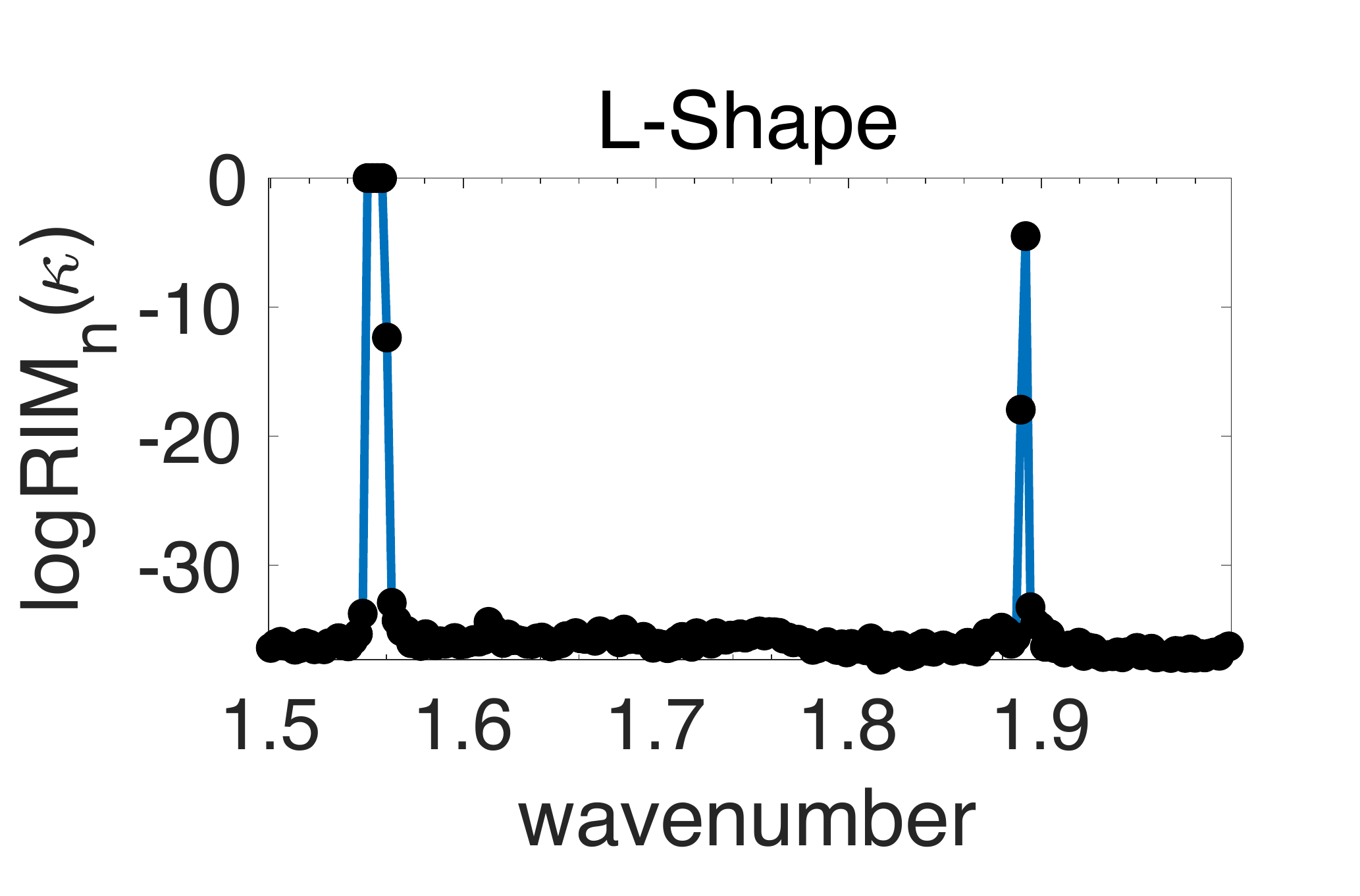}}
  \subfigure[]{\includegraphics[clip, trim = 0 220 0 230, width=.48\textwidth]{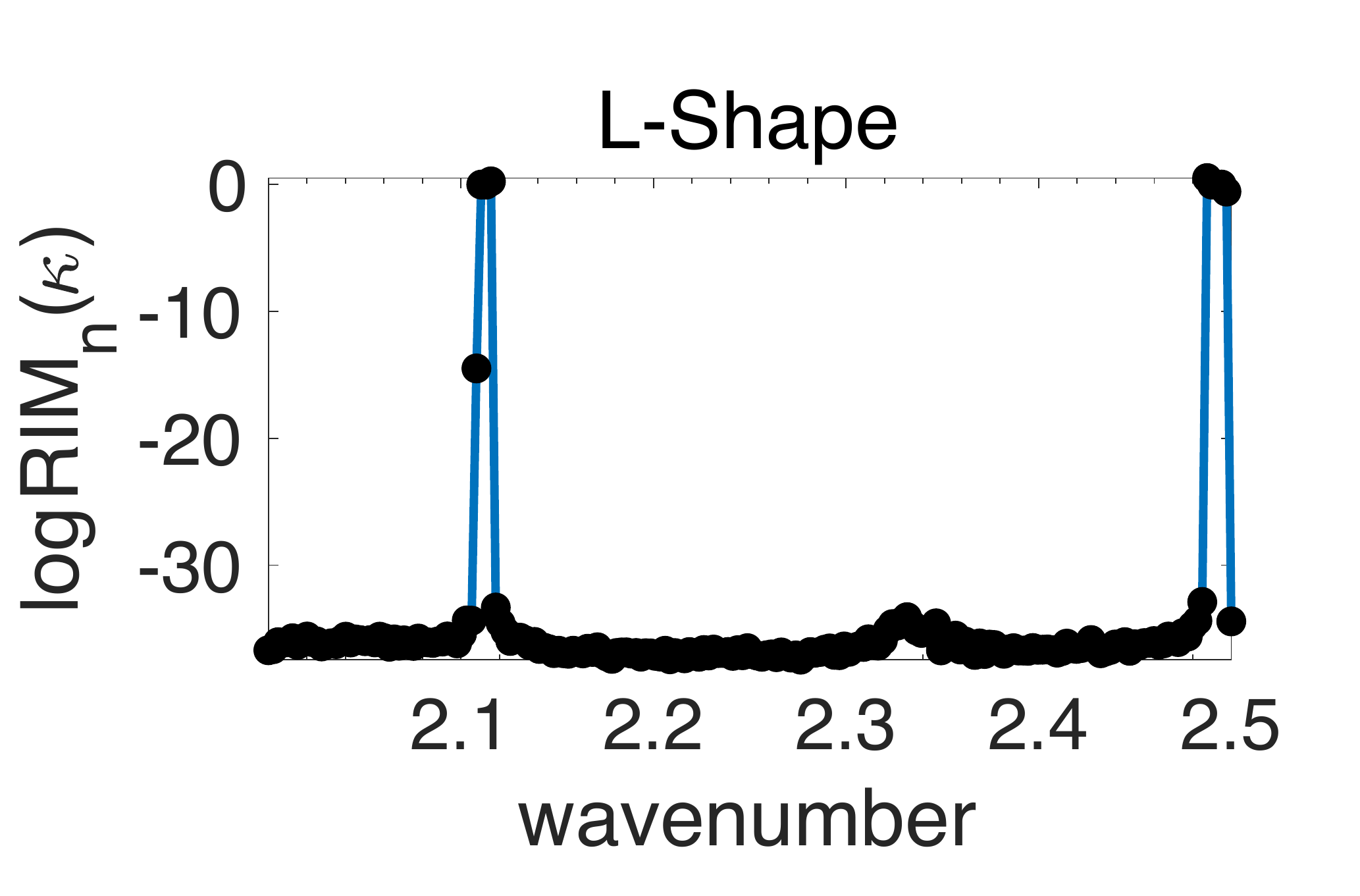}} \\
  \subfigure[]{\includegraphics[clip, trim = 0 220 0 230, width=.48\textwidth]{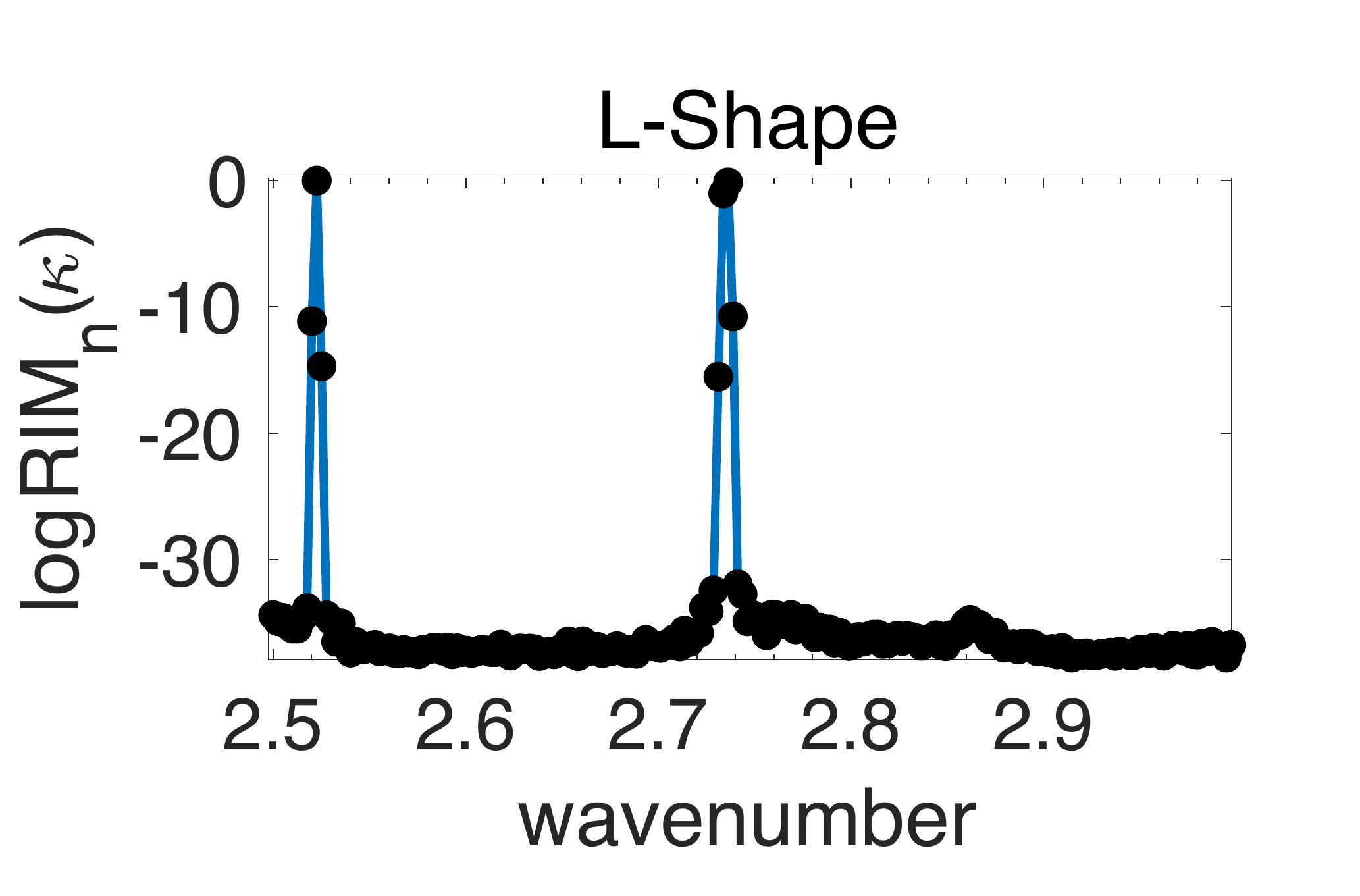}}
  \caption{Plots of $\log{{\rm  RIM}_n(\kappa)}$ for Example 5 in different intervals: (a) $[1.5, 2]$ (b) $[2, 2.5]$  (c) $[2.5, 3]$.} 
  \label{fig:L-shape} 
\end{figure}

\begin{example} 
In the last example, we consider a regular pentagon with vertexes $(\cos(\frac{2\pi j}{n}),\sin(\frac{2\pi j}{n})), j=0,1,2,3,4$. We choose the intervals to be $[1.5, 2]$, $[2, 2.5]$ and $[2.5, 3]$. Each interval is divided  into $200$ subintervals. The results are shown in Figure \ref{fig:pentagon}. The eigenvalues computed in this domain are $1.8945$, $2.2563$, $2.4724$, $2.6432$, $2.8769$, $2.9146$ and $2.9523$.
\end{example}

\begin{figure}
  \centering
  \subfigure[]{\includegraphics[clip, trim = 0 220 0 230, width=.48\textwidth]{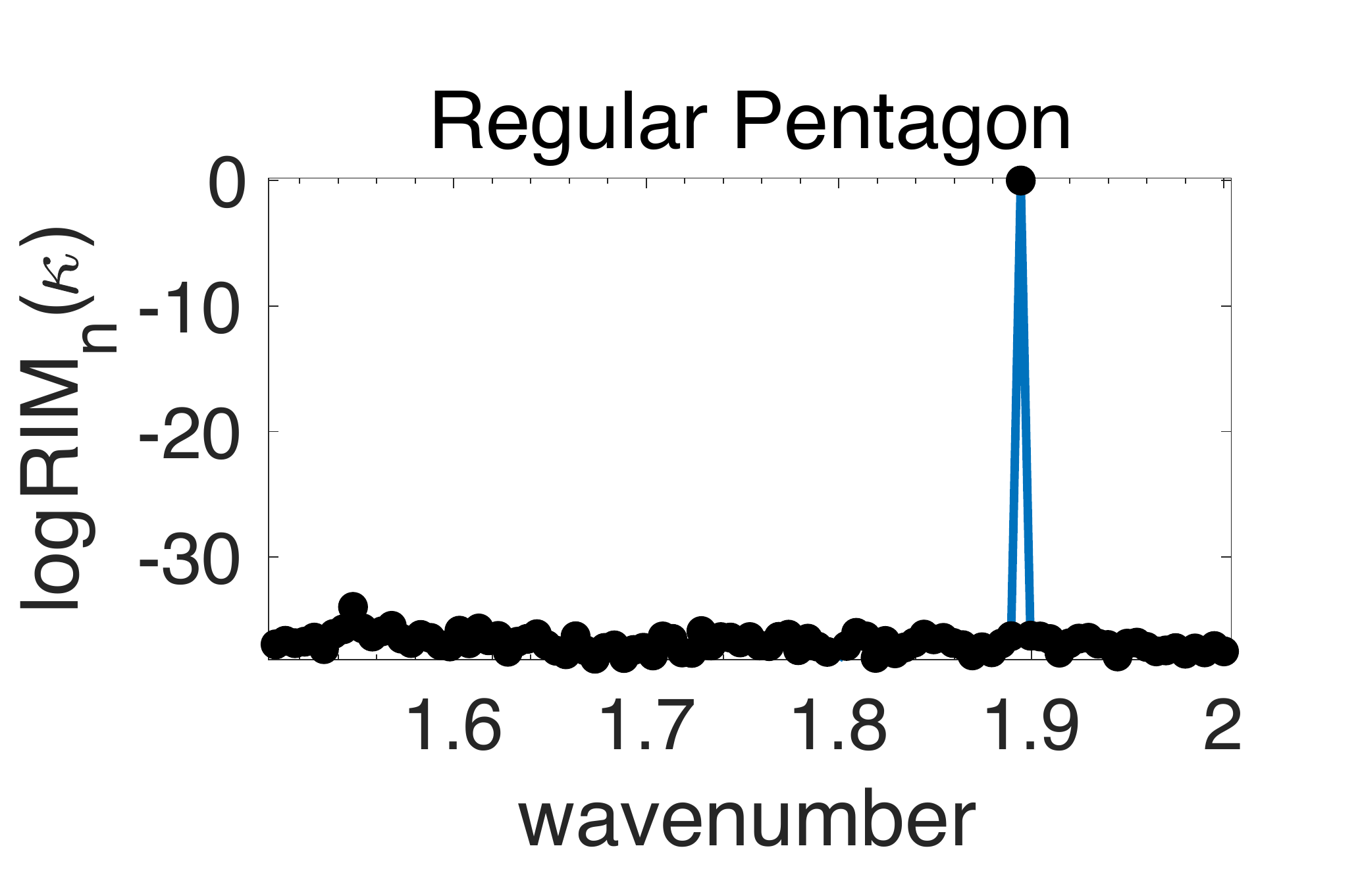}}
  \subfigure[]{\includegraphics[clip, trim = 0 220 0 230, width=.48\textwidth]{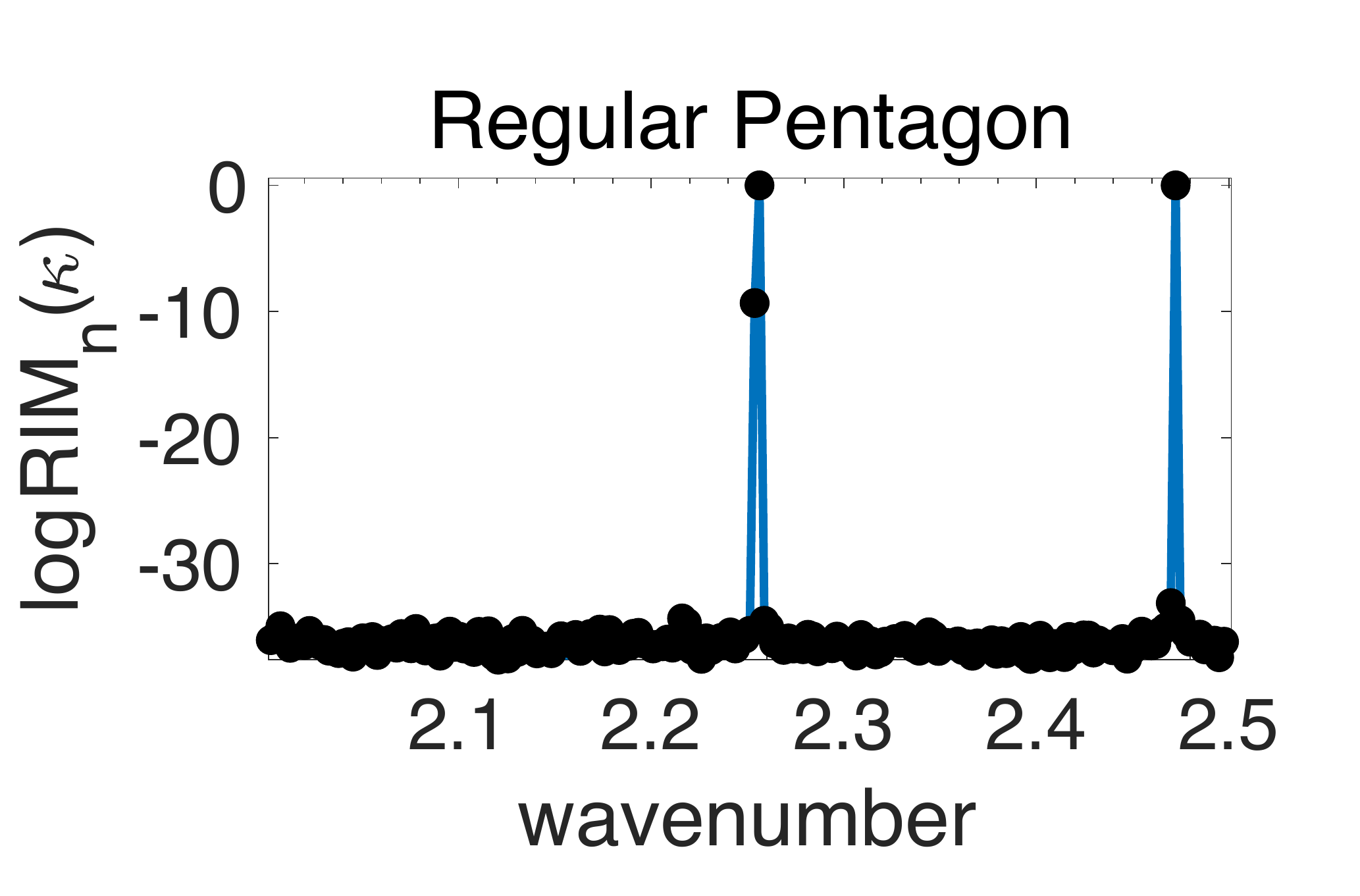}}
  \subfigure[]{\includegraphics[clip, trim = 0 220 0 230, width=.48\textwidth]{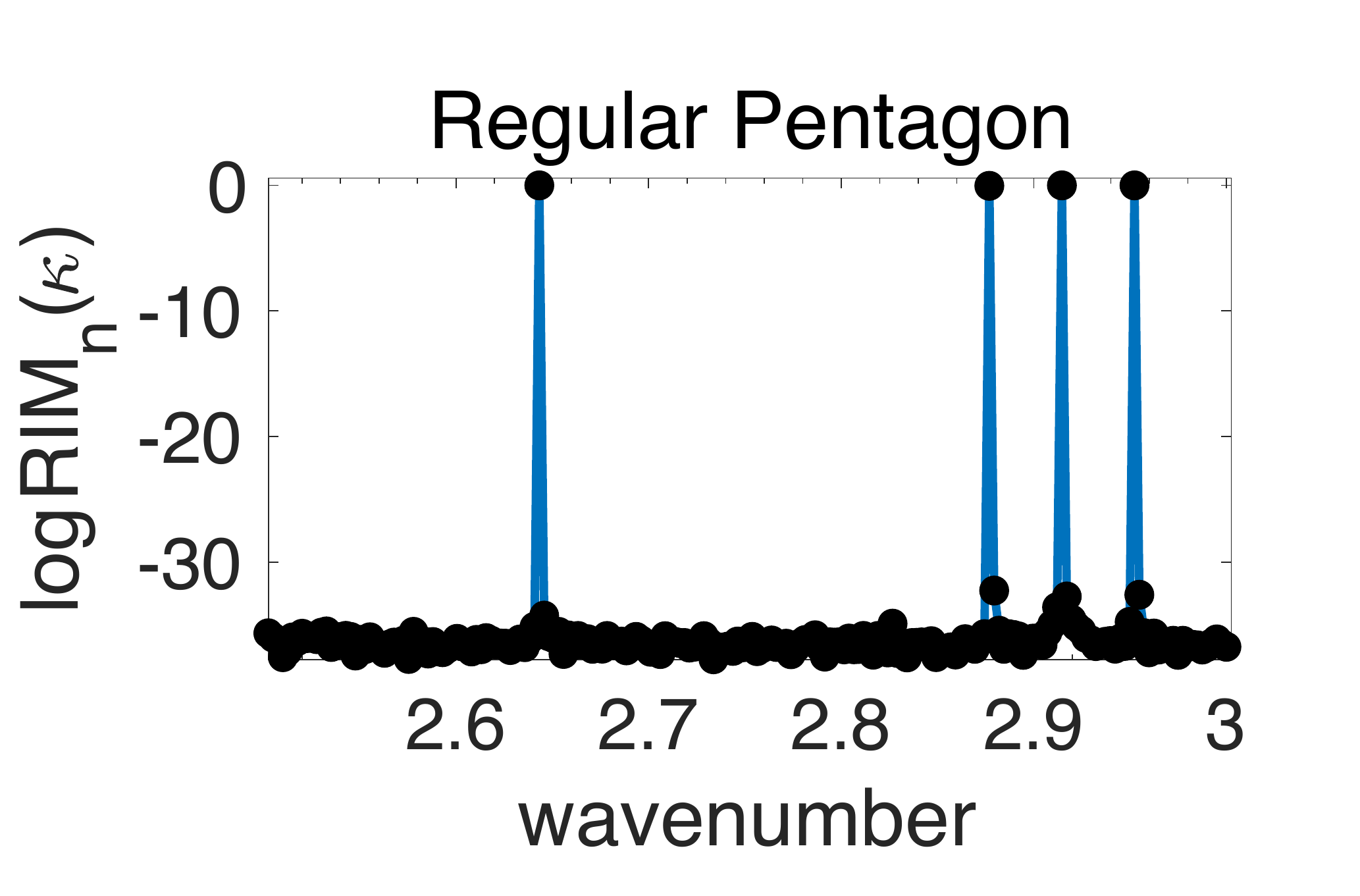}}
  \caption{Plots of $\log{{\rm  RIM}_n(\kappa)}$ for Example 6 in different intervals: (a) $[1.5, 2]$ (b)  $[2, 2.5]$ (c) $[2.5, 3]$.} 
  \label{fig:pentagon} 
\end{figure}



\section*{Acknowledgments}

The work of Y. Ma was supported by the NSFC grant No. 11901085 and the Research startup funds of DGUT No. GC300502-1. The work of F. Ma was supported by the NSFC grant No. 11771180. The work of Y. Guo was supported by the NSFC grant No. 11971133. The work of J. Li was partially supported by the NSFC grant No. 11971221, the Shenzhen Sci-Tech Fund No. JCYJ20190809150413261 and JCYJ20170818153840322 and Guangdong Provincial Key Laboratory of Computational Science and Material Design No. 2019B030301001. We would also like to thank Prof. Rainer Kress for his discussions on the Nystr\"{o}m method. 


\bibliographystyle{plain}
\bibliography{reference.bib}

\end{document}